\numberwithin{equation}{section}
\newtheorem{Theorem}{Theorem}[section]
\newtheorem*{Theorem*}{Theorem}
\newtheorem*{Corollary*}{Corollary}
\newtheorem*{Lemma*}{Lemma}
\newtheorem{Lemma}[Theorem]{Lemma}
\newtheorem{Proposition}[Theorem]{Proposition}
\newtheorem{Corollary}[Theorem]{Corollary}
\theoremstyle{definition}
\newtheorem{Definition}[Theorem]{Definition}
\theoremstyle{remark}
\newtheorem{Remark}[Theorem]{Remark}
\newtheorem{Remarks}[Theorem]{Remarks}
\newtheorem*{Remark*}{Remark}
\newbox\squ  
\newcommand{\N}{\mathbb{N}}
\newcommand{\Z}{\mathbb{Z}}
\newcommand{\A}{\mathbb{A}}
\renewcommand{\P}{{\mathbb{P}}}
\newcommand{\g}{\mathfrak{g}}
\newcommand{\Gm}{\mathbb{G}_m}
\newcommand{\m}{\mathsf{m}}
\newcommand{\Gr}{\operatorname{Gr}}
\renewcommand{\O}{\mathcal{O}}
\newcommand{\B}{\mathscr{B}}
\newcommand{\ad}{\operatorname{ad}}
\newcommand{\Lie}{\operatorname{Lie}}
\newcommand{\Ker}{\operatorname{Ker}}
\newcommand{\id}{\operatorname{id}}
\newcommand{\In}{\operatorname{in}}
\newcommand{\Hom}{\operatorname{Hom}}
\newcommand{\GL}{\operatorname{GL}}
\newcommand{\SL}{\operatorname{SL}}
\newcommand{\Spec}{\operatorname{Spec}}
\newcommand{\Ind}{\operatorname{Ind}}
\newcommand{\Char}{\operatorname{char}}
\newcommand{\alg}{\operatorname{-alg}}
\newcommand{\Lr}{\mathcal{L}_{\operatorname{ring}}}
\title[Smoothness of stabilisers in generic characteristic]{Smoothness of stabilisers in generic characteristic}
\author[Ben Martin]{Ben Martin}
\address
{Institute of Mathematics,
University of Aberdeen,
Aberdeen AB24 3FX,
United Kingdom}
\email{B.Martin@abdn.ac.uk}
\author{David I. Stewart}
\address{Department of Mathematics,
University of Manchester,
Manchester,
United Kingdom}
\email{david.i.stewart@manchester.ac.uk}
\author{Lewis Topley}
\address{Department of Mathematical Sciences,
University of Bath,
Bath BA2 7AY,
United Kingdom}
\email{lt803@bath.ac.uk}
\keywords{Smooth centraliser, smooth normaliser, Gr\"obner basis, Lefschetz principle}
\begin{document}

\begin{abstract}
Let $R$ be a commutative unital ring. Given a finitely presented affine $R$-group scheme $G$ acting on a separated scheme $X$ of finite type over $R$, we show that there is a prime $p_0$ such that for any $R$-algebra $k$ which is an algebraically closed field of characteristic $p\geq p_0$, the centraliser in $G_k$ of any closed subscheme of $X_k$ is smooth. When $X$ is not necessarily separated we show similarly that for any closed subscheme $Y \subseteq X$ there is a $p_1$ depending on $Y$ such that when $k$ has characteristic $p \geq p_1$ the normaliser of $Y$ in $G_k$ is smooth. We prove these results using the Lefschetz principle together with careful application of Gr\"obner basis techniques, and using a suitable notion of the complexity of an action.

We apply our results to demonstrate that the Kostant-Kirillov-Souriau theorem holds for Lie algebras of algebraic groups in large positive characteristics. In particular, every such Lie algebra decomposes as a disjoint union of symplectic varieties, each of which is a coadjoint orbit.
\end{abstract}

\maketitle

\section{Introduction}
Let $R$ be a commutative unital ring. By an {\em $R$-field} we mean an $R$-algebra that is also a field and by an {\em algebraic $R$-group} we mean a finitely presented affine $R$-group scheme. We prove the following.

\begin{Theorem}
\label{thm:smoothcent}
 Let $G$ be an algebraic $R$-group and let $X$ be a separated $G$-scheme finitely presented and of finite type over $R$. Then there exists $p_0 \in \N$ such that whenever $k$ is an  $R$-field of characteristic $p\geq p_0$, the centraliser $C_{G_k}(Y)$ is smooth for every closed subscheme $Y$ of $X_k$.
\end{Theorem}

\begin{Theorem}
\label{thm:smoothnorm}
 Let $G$ be a an algebraic $R$-group and let $X$ be a $G$-scheme finitely presented and of finite type over $R$. Let $Y$ be a closed subscheme of $X$.  Then there exists $p_1 \in \N$ such that whenever $k$ is an $R$-field of characteristic $p\geq p_1$, the normaliser $N_{G_k}(Y_k)$ is smooth.
\end{Theorem}

Theorem~\ref{thm:smoothcent} uses the hypothesis that $X$ is separated in order to infer that the centralisers are closed subschemes of $G$ (see \cite[I.2.6]{Jan03}). Note also that the lower bound of $p_0$ in Theorem~\ref{thm:smoothcent} for centralisers does not depend on $Y$; the bound in Theorem~\ref{thm:smoothnorm} for normalisers does, however, depend on $Y$, and this dependence cannot be removed: see Remark~\ref{rem:normdpndce}.

Natural examples of algebraic groups over rings abound, of course. The split reductive groups are all $\Z$-defined \cite{Dem63}; and so too are the subgroups of reductive groups normalised by a split maximal torus---so-called \emph{subsystem subgroups}. This class includes all parabolic subgroups of reductive groups.

There are several known special cases of the theorem. One of the most influential in Lie theory occurs when $G$ is split reductive and $X$ is either $G$ itself or its Lie algebra ${\mathfrak g}$, on which $G$ acts by the relevant adjoint action. Then it is well-known that the group $G$, the algebra ${\mathfrak g}$, and the adjoint action are defined over $\Z$ (see \cite[II.1.1]{Jan03}) and the centralisers of single elements of $X(\bar k)$ are smooth whenever $p$ is a very good prime for $G$. This follows (see \cite[Thm.~1.3(a)]{BMRT07}) from work of Richardson \cite{Ric67}, who used the notion of a reductive pair to give an elegant proof for very good $p$ that the number of unipotent and nilpotent orbits of $G$ is finite. This smoothness result was generalised in \cite[Thm.~1.2]{BMRT07} to cover arbitrary subgroups of $G(\bar k)$ and subalgebras of $\Lie(G)$. The hypotheses were further weakened in \cite{Her13}. Normalisers, while much less well-behaved, were thoroughly considered in \cite{HS16}, where it was shown that (necessarily large) bounds on the characteristic exist, depending on the root system, which ensure the normalisers of subspaces of the Lie algebras of reductive groups are smooth. Through the classification of nilpotent and unipotent orbits, these results have found applications in developing the subgroup structure of simple algebraic groups and subalgebra structure of their Lie algebras; recent examples include \cite{LT18} and \cite{PSMax}.

One new feature of our work is that we move beyond the affine case: our results apply not just to affine varieties but to quasi-projective varieties (and other varieties of finite type).  Here is an application.  The second author computed explicitly in \cite{SteComp} the orbits of exceptional groups on their Lie algebras, determining when centralisers and stabilisers of lines are smooth for minimal induced (or dual-Weyl) modules; non-smoothness occurs only in characteristics $2$ or $3$.  This motivated [\textit{op.~cit.}, Question 1.4], to which our theorem provides the following strong answer.

\begin{Corollary}
\label{cor:module}
 Let $G$ be an algebraic $R$-group and let $V$ be a $G$-module which is finitely generated and projective as an $R$-module. Then there is a prime $p_V$ such that whenever $k$ is an  $R$-field of characteristic $p\geq p_V$, the centraliser $C_{G_k}(W)$ and normaliser $N_{G_k}(W)$ are smooth for any $k$-subspace $W$ of $V_k$.
\end{Corollary}

\noindent The smoothness of centralisers follows immediately from Theorem~\ref{thm:smoothcent}.  To prove smoothness of normalisers one can apply Theorem~\ref{thm:smoothnorm}.  Alternatively, one can apply Theorem~\ref{thm:smoothcent} to $X=  \Gr_m(V)$, where $\Gr_m(V)$ denotes the Grassmannian of $m$-generated submodule of $V$: the idea is that the stabiliser of a subspace is the centraliser of the corresponding point in the Grassmannian---for details, see Section~\ref{subsec:proofs}.  The same conclusion does not hold if we weaken the hypothesis and consider all $G_k$-modules $V$ of bounded dimension (see Remark \ref{nonsmoothex}), but it does if $G_k$ is reductive and we bound the weights of the $G_k$-module $V$ in an appropriate sense: see Proposition~\ref{prop:repns}. 

In the final section of this paper we apply our main theorem to prove a modular analogue of the Kostant-Kirillov-Souriau (KKS) theorem from symplectic geometry. Their theorem states that if $G$ is a complex algebraic group with Lie algebra $\g$ then the symplectic leaves of the Poisson variety $\g^*$ are precisely the coadjoint orbits. When $G$ is semisimple, this result leads to a classification of symplectic homogeneous $G$-varieties, since they all arise as finite covers of coadjoint orbits. We refer the reader to \cite[\textsection~IV.7]{GS77} for a detailed background of the theory. When $k$ is an algebraically closed field of characteristic $p > 0$ one can ask whether the Poisson variety $\g^*$ decomposes into a disjoint union of symplectic $G$-homogeneous subvarieties. In general the answer is negative, however we show that the KKS theorem holds whenever the characteristic is sufficiently large (Theorem~\ref{T:leaves}).

Let us say some words on the proofs of our main results.  The central idea is to combine the Lefschetz principle from first-order model theory with Gr\"obner basis techniques.  This approach was suggested in \cite{Sch00} as a method to solve certain problems in algebraic geometry, but to our knowledge the current paper is the first place it has been carried out in practice.  Roughly speaking, the Lefschetz principle says that a first-order property that holds for algebraically closed fields of characteristic 0 holds for algebraically closed fields of large enough characteristic.  In order to apply it, one needs suitable notions of complexity for schemes, morphisms and group actions.  This allows us to work with honest first-order sentences without needing parameters from the field: rather than considering a fixed $G$ and a fixed $X$, we quantify over all $G$ and $X$ of bounded complexity (a similar trick was used in \cite{MSTT18}).  A crucial tool is Lemma~\ref{idealmem}, which allows us to give a bound on complexities arising from certain ideal membership problems.  We find, surprisingly, that although certain complexities need to be constrained, others do not: for example, we prove variations of Theorems~\ref{thm:smoothnorm} and \ref{thm:smoothcent} which hold for schemes $X$ that are not of finite type (see Corollaries~\ref{cor:smoothnorm} and \ref{cor:smoothcent}). 

The paper is set out as follows.  In Section~\ref{S:preliminaries} we recall the language of Hopf algebras and the Lefschetz principle from model theory, which we use to pass information from characteristic zero to large positive characteristics. Section~\ref{S:smoothcentralisers} is the heart of the paper. Here we recall the theory of Gr{\"o}bner bases which allows us to express a criterion for smoothness in terms of first-order sentences in the language of rings.  We recall the definition of complexity for schemes and morphisms (Section~\ref{subsec:complexity}).  We introduce the notion of a $d$-bounded Hopf quadruple, and in Theorem~\ref{gensmooth} we prove that such Hopf algebras correspond to smooth algebraic groups in large characteristics $p > p_0(d)$. In Section~\ref{S:normalandcent} we provide the proofs of the main theorems, using the functor-theoretic descriptions of centralisers and normalisers and the notion of $G$-complexity (Definition~\ref{defn:Nbdd}).
Finally in Section~\ref{S:KKS} we provide a short proof of our modular version of the KKS theorem from symplectic geometry.

\bigskip

\noindent {\bf Acknowledgements:} The research of the second author is funded by Leverhulme Trust Research Project Grant number RPG-2021-080. The third author is grateful for the support of EPSRC grant EP/N034449/1, as well as funding from the UKRI, grant number MR/S032657/1. We are grateful to Sylvy Anscombe and Charlotte Kestner for helpful discussions.

For the purpose of open access, the author has applied a Creative Commons Attribution (CC BY) [or other appropriate open licence] licence to any Author Accepted Manuscript version arising from this submission.

\section{Preliminaries}
\label{S:preliminaries}
Throughout the paper we consider a fixed commutative unital ring $R$.

\subsection{Schemes, group schemes and Hopf algebras}
\label{ss:groupschemes}
We take the functorial approach to schemes, as per \cite{DG:1970} and \cite{Jan03}. Thus for an $R$-algebra $Q$ we think of $\Spec_R(Q)$ as the functor $\Hom_{R\text{-Alg}}(Q,-)\colon \underline{R\text{-Alg}}\to\underline{\text{Set}}$.  An \emph{$R$-functor} is a functor $X\colon \underline{R\text{-Alg}}\to\underline{\text{Set}}$; we call an $R$-functor $X$ \emph{affine} if it is isomorphic to $\Spec_R(R[X])$ for some $R$-algebra $R[X]$. If $Q$ is an $R$-algebra then $X_Q$ denotes the $Q$-functor obtained from $X$ by base extension.  We say a subfunctor $Y$ of an $R$-functor $X$ is \emph{open} if for every $R$-algebra $Q$ and natural transformation $\beta\colon \Spec_R Q\to X$, the subfunctor $\beta^{-1}(Y)$ of $\Spec_R Q$ is an open subfunctor of $\Spec_R Q$, and is {\em closed} if for every $R$-algebra $Q$ and natural transformation $\beta\colon \Spec_R Q\to X$, the subfunctor $\beta^{-1}(Y)$ of $\Spec_R Q$ is a closed subfunctor of $\Spec_R Q$. Then $X$ is a \emph{scheme} (or an \emph{$R$-scheme}) if it is \emph{local} in the sense of \cite[I.1.8]{Jan03} and admits a decomposition $X=\bigcup_{i\in \mathbb I}X_i$ for some indexing set $\mathbb I$, where the $X_i$ are open affine subfunctors of $X$. 

If $Q$ is an $R$-algebra and $X$ is an $R$-scheme then $X_Q$ is an $Q$-scheme \cite[I.1.10]{Jan03}.  We say $X$ is of \emph{finite type} if $\mathbb I$ is finite and each $R[X_i]$ is finitely generated over $k$.  We say $X$ is \emph{locally finitely presented} if each $R[X_i]$ is a finitely presented $R$-algebra.  If $R$ is Noetherian (e.g., a field) then any finitely generated $R$-algebra is finitely presented, so in this case any $R$-scheme of finite type is locally finitely presented.  We do not assume that all schemes are separated; recall that a scheme is \emph{separated} if that the diagonal map $D:X\to X\times X$ is an embedding. By an embedding of schemes we mean a closed immersion.

An {\em affine $R$-group scheme} $G$ is a functor from $\underline{R\text{-Alg}}$ to \underline{Grp}, which as a functor to $\underline{\text{Set}}$, is naturally equivalent to one of the form $\Spec_R(R[G])$ for some finitely generated $R$-algebra $R[G]$.  We consider only the case where $R[G]$ is finitely presented; in keeping with \cite[I.2.1]{Jan03}, we then call $G$ an {\em algebraic $R$-group}.  We do not assume that algebraic $R$-groups are smooth.  There is a natural notion of a closed subgroup of $G$ ({\em loc.\ cit.}).  The archetypal example of an algebraic $R$-group is $\GL_d$, which is also an example of a split reductive group. 

A Hopf $R$-algebra consists of data $(R[G], \Delta, \sigma, \epsilon)$ where $R[G]$ is an $R$-algebra, and there are $R$-algebra homomorphisms $\Delta \colon  R[G] \to R[G] \otimes_R R[G]$, $\sigma \colon  R[G] \rightarrow R[G]$ and $\epsilon \colon  R[G] \to R$  satisfying the dual of the group axioms \cite[I.2.3(1--3)]{Jan03}:

\begin{eqnarray}(\Delta\otimes\id)\circ\Delta=(\id\otimes\,\Delta)\circ \Delta\label{assoc}\\
(\epsilon\,\bar\otimes\id)\circ\Delta=(\id\bar{\otimes}\,\epsilon)\circ\Delta\label{ident}\\
(\sigma\,\bar\otimes\id)\circ\Delta=\bar\epsilon=(\id\bar\otimes\,\sigma)\circ\Delta\label{inv}.\end{eqnarray}
\noindent Here the symbol $\varphi \bar \otimes \psi$ denotes the tensor product of the maps $\varphi, \psi$, followed by the natural multiplication map $R[G] \otimes R[G] \to R[G]$, and $\bar\epsilon$ denotes $\epsilon$ followed by the inclusion of $R$ in $R[G]$. Hence by definition, the category of algebraic $R$-groups is the opposite category to the category of finitely presented Hopf algebras over $R$. 

The Lie algebra $\Lie(G)$ of an algebraic $R$-group $G$ is defined to be the $R$-module of all $R$-linear maps $I / I^2 \to R$ where $I = \Ker(\epsilon)$; in other words it is $\ker \left(G(R[\varepsilon]/(\varepsilon^2))\to G(R)\right)$ where $R[\varepsilon]/(\varepsilon^2)$ is the algebra of dual numbers and the map takes $\epsilon$ to $0$. Following \cite[I.7.7(3)]{Jan03} one obtains a natural $R$-linear Lie bracket on $\Lie(G)$ induced by the comultiplication $\Delta$. Every morphism of algebraic $R$-groups induces a natural $R$-linear homomorphism of their Lie algebras.

When $G$ is an algebraic $R$-group and $k$ is any $R$-algebra, we can consider the base change $G_k$, which is an algebraic $k$-group. To see this concretely, suppose $$R[G] \cong R[x_1,...,x_n]/ (g_1,...,g_m).$$  We have an obvious map $\omega \colon  R[x_1,...,x_n] \rightarrow k[x_1,...,x_n]$ and we see that
\begin{eqnarray}
\label{e:presentingbasechage}
G_k \cong \Spec_k \left(k[x_1,...,x_n] / (\omega(g_1),..., \omega(g_m))\right).
\end{eqnarray}

An action of $G$ on an $R$-scheme $X$ is a natural transformation $\alpha\colon G\times X\to X$, such that $\alpha(Q)\colon G(Q)\times X(Q)\to X(Q)$ is a group action for all $R$-algebras $Q$. If $X$ is an $R$-module (see \cite[I.2.2]{Jan03}) and $G(Q)$ acts through $Q$-linear transformations of $X(Q)$, then we say $X$ is a $G$-module. Note that if $X$ is affine, then we get a coaction map of $R$-algebras $\Delta_X\colon R[X]\to R[X]\otimes R[G]$. In case $X$ is a $G$-module, this is the \emph{comodule map} of \cite[I.2.8]{Jan03}.

Let $V$ be a $G$-module such that $V$ is finitely generated and projective as an $R$-module.  We may regard $V$ as an affine $G$-scheme of finite type over $R$ with co-ordinate ring the symmetric algebra $S[V^*]$ \cite[I.2.2 and I.2.7]{Jan03}.  This construction commutes with base extension.

\subsection{Model theory and the Lefschetz principle}
\label{ss:modeltheory}

In this paper we use the Lefschetz principle to deduce statements about algebras over algebraically closed fields of large positive characteristic from the corresponding statements in characteristic zero.  In doing so we pursue a theme from our earlier work \cite{MSTT18}, in which we proved a version of the first Kac-Weisfeiler Theorem for representations of modular Lie algebras using the Lefschetz principle.  A detailed introduction to the principle can be read in \cite{Ma02} for example, and a concise overview may be found in \cite[\textsection 2.1]{MSTT18}.

For future reference we state the Lefschetz principle (the version we use is taken from \cite[Cor.\ 2.4]{MSTT18}). The \emph{language of rings $\Lr$} is the collection of first-order formulas that
can be built from the symbols $\{\forall, \exists, \vee, \wedge, \neg, +,  $ $ -, \times, 0, 1, =\}$ along with
arbitrary choice of variables.  A {\em sentence} is a formula containing no free variables.  For example, the formula $(\exists y) y^2= x$ is not a sentence because the variable $x$ is free, but by quantifying over $x$ we can obtain a sentence: e.g., $(\forall x)(\exists y) y^2= x$.  Given a ring $R$, a sentence is either true or false for that ring: for instance, the sentence $(\exists y_1)(\exists y_2) (( y_1 \ne y_2) \wedge (y_1^2 = y_2^2))$ is true in every field of characteristic $p> 2$, but is false in every ring of characteristic 2.

A {\em theory} is a set of sentences.  A ring $\mathfrak{R}$ is a {\em model} of a theory $T$ if every sentence belonging to $T$ is true in $\mathfrak{R}$: for instance, if $p$ is either 0 or prime then {\sf AC}$_p$ is the set of sentences that are true in every algebraically closed field of characteristic $p$, and any algebraically closed field of characteristic $p$ is a model of $T$.

\begin{Theorem} (Lefschetz principle)
\label{T:lefschetz}
If $\phi$ is a sentence in $\Lr$ then:
\begin{enumerate}
\item If $\phi$ is true in some model of {\sf AC}$_p$ where $p \ge 0$ then $\phi$ is true in every model of {\sf AC}$_p$.
\item If $\phi$ is true in some model of {\sf AC}$_0$ then there exists $p_0 \in \N$ such that $\phi$ is true in any model of ${\sf AC}_p$ for $p > p_0$.
\end{enumerate}
\end{Theorem}

\subsection{Ideals in tensor products of algebras}

Suppose that $k$ is a field and that $A, B$ are finitely generated $k$-algebras. Let $K$ be an ideal of $A \otimes_k B$ with generators $f_1,...,f_n$.  Fix a basis $\Xi= \{c_\lambda \mid \lambda\in \Lambda\}$ for $B$ over $k$.  We can write $f_i = \sum_m a_{i,m}\otimes b_{i,m}$ for elements $a_{i,m} \in A$ and $b_{i,m} \in \Xi$. Without loss of generality we can assume the elements $\{b_{i,m} \mid m\}$ are distinct for each fixed $i$, and under this assumption the ideal $J \subseteq A$ generated by $\{a_{i,m} \mid i,m\}$ is uniquely determined by $K$, as we see from the next lemma.

\begin{Lemma}
\label{L:idealsintensors}
$J$ is the smallest ideal of $A$ such that $K \subseteq J \otimes_k B$.
\end{Lemma}

\begin{proof}
Certainly $K \subseteq J\otimes_k B$ and so it suffices to take an ideal $L \subseteq A$ such that $K \subseteq L \otimes_k B$ and show that $J \subseteq L$. Observe that $A\otimes_k B$ is a free $A$-module with basis $\{1\otimes c_\lambda \mid \lambda\in \Lambda\}$. Furthermore $L \otimes_k B = \bigoplus_{\lambda\in \Lambda} L \otimes c_\lambda$ and so if $f_i \in L \otimes_k B$ then $a_{i,m} \in L$ for all $m$ appearing in the expression $f_i = \sum_m a_{i,m} \otimes b_{i,m}$. It follows that $J \subseteq L$ as required.
\end{proof}

\section{Smoothness of centralisers and Gr{\"o}bner bases}
\label{S:smoothcentralisers}

\subsection{Bounded polynomials and Gr\"obner bases}
\label{ss:boundedpolysandGrobnerbases}

Throughout this section we fix $n \in \N$ and $k$ denotes an algebraically closed field.  We will want to quantify over all $k$-algebras of bounded presentation, equipped with the structure of a Hopf algebra of bounded presentation.  Here ``bounded'' means that the lengths and degrees of the polynomial expressions which appear in the defining ideal of the underlying affine algebra, together with the comultiplication, antipode and counit are bounded.  To do so, we need to formulate statements to say that the Hopf algebra axioms are satisfied. Our main tool to this end will be to quantify over all Gr\"obner bases of bounded degree. We refer to \cite[Ch.~15]{Eis95} for a hearty introduction to Gr\"obner bases, but for our purposes we collect a simplified version here. 

The basic principle is to provide a process for reduction of elements of $S:=k[x_1,\dots x_n]$ by elements of an ideal, which will terminate in a finite number of steps. Hence one wants to know when the size of an expression is reduced by an operation, and for this one first needs to choose a total order on monomials. This order needs to be \emph{admissible} in the sense that $m_1>m_2$ if and only if $m_1m_3>m_2m_3>m_2$ for any monomials $m_1,m_2,m_3$ such that $m_3 \ne 1$. 

We will demand of the order that for any monomial $m\in S$, there are only finitely many $m'<m$.  For instance, we may use the \emph{homogeneous} (or \emph{graded}) \emph{lexicographic ordering}, in which \begin{eqnarray*}m:=x_1^{a_1}\dots x_n^{a_n}>m':=x_1^{b_1}\dots x_n^{b_n}\text{ if and only if }\deg m>\deg m'\\\text{ or if }\deg m=\deg m' \text{ then }a_i>b_i\text{ for the first index $i$ with }a_i\neq b_i.\end{eqnarray*} Thus the set of monomials is isomorphic to $\N$ as a totally ordered set. We define $\m_r$ to be the \emph{$r$th monomial in $S$}; observe that $\m_{\ell_d}$ is then the top monomial of total degree $d$.  If $m$ is a monomial then we define $k^*\cdot m$ to be $\{\lambda m\,|\,\lambda\in k^*\}$ and we call elements of this form {\em terms}; every polynomial can be written uniquely as a sum of terms.
We extend $>$ to terms by defining $\lambda m_i> \mu m_j$ if $i> j$ and $0\neq \lambda, \mu\in k$, and we define the \emph{initial term} $\In(f)$ to be the greatest term appearing in $f$ with respect to $>$ (taking $\In(0)= 0$). For an ideal $I\subseteq S$, we define $\In(I)$ to be the ideal generated by the elements $\In(f)$ for all $f\in I$.  If $g$ and $h$ are terms then there is a unique monomial $m$ such that $m$ divides $g$ and $h$, and any other monomial dividing $g$ and $h$ also divides $m$: we define $\gcd(g,h)= m$.

\begin{Definition}\label{grobdef} A \emph{Gr\"obner basis} with respect to $>$ is an ordered list of elements $(g_1,\dots, g_t)\in S^t$ for some $t$ such that if $I$ is the ideal of $S$ generated by $g_1,\dots, g_t$, then $\In(g_1),\dots,\In(g_t)$ generate $\In(I)$.\footnote{In contrast to \cite{BW93}, but consistently with \cite{Eis95} we allow elements of Gr\"obner bases to be zero.}  Note that the $g_i$ need not be distinct and that although we work with ordered lists, the property of being a Gr\"obner basis does not depend on the ordering of the $g_i$. \end{Definition}

Fix $d\in \N$. We wish to view a polynomial in $S=k[x_1,\dots,x_n]$ as a finite list of its coefficients, where we will ultimately be quantifying over all possible lists of those coefficients. We define the degree $\deg(f)$ of a polynomial $f\in S$ to be the total degree of $\In(f)$. According to our chosen (homogeneous) monomial order, $\deg(f)$ is the highest total degree of any term in $f$. Let $\mathcal{X}_d\subset S$ be the set of monomials of degree at most $d$, and let $\ell_d:=|\mathcal{X}_d|$. By homogeneity of the monomial order again, this means $\mathcal{X}_d=\{\m_1,\dots,\m_{\ell_d}\}$. Furthermore, we may identify the set $S_d$ of polynomials of degree at most $d$ with the Cartesian product $k^{\ell_d}$: the polynomial $\sum_{i=1}^{\ell_d} \lambda_i \m_i$  corresponds to $(\lambda_1, \lambda_2,\dots,\lambda_{\ell_d}) \in k^{\ell_d}$.

\begin{Definition}
Let $\mathcal{S}=R[x_1,\dots,x_n]$ be a polynomial ring over a ring $R$. Let $\mathcal{S}_d$ denote the polynomials in $\mathcal{S}$ of degree at most $d$. We say that an ordered list $\B$ of polynomials in $R$ is \emph{$d$-bounded} if $|\B|=\ell_d$ and $\B$ consists of elements of $\mathcal{S}_d$. 

If $R=k$ and $\B$ is also a Gr\"obner basis, we say $\B$ is a $\emph{$d$-bounded Gr\"obner basis}$.
\end{Definition}

\noindent We identify the set of $d$-bounded lists of elements of $S$ with $S_d^{\ell_d} = k^{\ell_d^2}$. Observe the following:

\begin{Remarks}(i) Any Gr\"obner basis of length greater than $\ell_d$ consisting of polynomials of degree at most $d$ can be reduced to a Gr\"obner basis of cardinality at most $\ell_d$. For if there are at least $\ell_d+1$ elements then two, $f$ and $g$ say, must have the same leading monomial. So for some $\lambda\in k$, $g-\lambda f$ has a lower leading monomial and replacing $g$ by $g-\lambda f$ we still have a Gr\"obner basis, directly from Definition \ref{grobdef}. Inductively we may assume $g$ is zero, thus it can be removed to produce a smaller Gr\"obner basis.

(ii) Conversely, any finite list of polynomials (resp.~Gr\"obner basis) can be embedded into a $d$-bounded list of polynomials (resp.~$d$-bounded Gr\"obner basis) for some $d$ by appending an appropriate number of zero polynomials to the end of the list.
\label{grobrem}\end{Remarks}

\begin{Lemma}\label{initial} Let $d\in \N$ and let $1\leq e\leq \ell_d$. Then there is a first-order formula $\phi_{e,d}$ in the language $\Lr$ of rings with $\ell_d$ free variables such that for any polynomial $f\in S$ of degree at most $d$, \[\phi_{e,d}(f)\text{ holds }\iff \In(f) \in k^*\cdot \m_e. \] 
\end{Lemma}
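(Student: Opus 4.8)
The plan is to produce $\phi_{e,d}$ completely explicitly as a quantifier-free formula, so the content of the lemma is just the bookkeeping that this formula does what is claimed. Recall that under the identification of $S_d$ with $k^d$, a $d$-bounded polynomial $f$ corresponds to the tuple $(\lambda_1,\dots,\lambda_d)$ with $f=\sum_{i=1}^{d}\lambda_i\m_i$. Because the chosen admissible order enumerates the monomials so that $\m_1<\m_2<\cdots$, the initial term $\In(f)$ is the monomial $\m_j$ where $j$ is the \emph{largest} index with $\lambda_j\neq 0$ (and $f=0$, with no initial term, when all the $\lambda_i$ vanish). Hence $\In(f)=\m_e$ holds precisely when $\lambda_e\neq 0$ and $\lambda_i=0$ for every $i$ with $e<i\leq d$; note that the condition $\lambda_i = 0$ for $i>d$ is automatic since $f$ is $d$-bounded, so only finitely many coordinates are constrained.

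Accordingly I would set, with free variables $x_1,\dots,x_d$,
\[
\phi_{e,d}(x_1,\dots,x_d)\;:=\;\neg(x_e=0)\ \wedge\ \bigwedge_{i=e+1}^{d}(x_i=0),
\]
with the convention that the conjunction is empty, hence vacuously true, when $e=d$, so that $\phi_{d,d}$ is simply $\neg(x_d=0)$. Since $d$ is a fixed natural number, this is a genuine finite string built from the symbols $\{=,\wedge,\neg\}$, the variables $x_1,\dots,x_d$, and the constant $0$, so it is a legitimate quantifier-free formula of $\Lr$ with exactly $d$ free variables; in particular it is independent of the field $k$.

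It then remains only to verify the equivalence, which is immediate: interpreting $\phi_{e,d}$ over $k$ and substituting the tuple $(\lambda_1,\dots,\lambda_d)$ attached to $f$, the formula asserts exactly that $\lambda_e\neq 0$ and that $\lambda_i=0$ for all $e<i\leq d$, which by the discussion above is equivalent to $\In(f)=\m_e$. There is no genuine obstacle here; the only points that deserve a word of care are that one must invoke the finiteness of $d$ to know the conjunction is an admissible first-order formula (an infinite conjunction would not be), and that the admissibility and enumeration of the monomial order are what guarantee $\m_1<\cdots<\m_d$, so that ``greatest term appearing with nonzero coefficient'' translates into ``largest index $i$ with $\lambda_i\neq 0$''.
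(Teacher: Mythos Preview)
Your proof is correct and follows essentially the same approach as the paper: both explicitly write down the quantifier-free formula asserting that the $e$th coefficient is nonzero and all higher coefficients vanish. Your version is in fact slightly more careful, making the finite conjunction and the edge case $e=d$ explicit.
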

\begin{proof} 
After identifying the set $S_d$ with the space $k^{\ell_d}$, so that the polynomial $f = \sum_{i=1}^{\ell_d} \lambda_i \m_i$ identifies with $(\lambda_1,...,\lambda_{\ell_d})$, then the required formula is
\[ (\lambda_e \neq 0) \wedge (\lambda_{e+1} = 0) \wedge (\lambda_{e+2} = 0) \wedge\cdots \wedge (\lambda_{\ell_d} = 0).\qedhere\]
\end{proof}

Given a $d$-bounded list of polynomials, we need to check with a first-order formula that it forms a Gr\"obner basis. For this, we appeal to Buchberger's criterion \cite[Theorem 15.8]{Eis95}, which we reproduce here.

Let $c$ be any integer and let $\B = (g_1,...,g_c) \in S^c$. For each pair of indices $1 \leq i,j \leq c$, 
we define $$m_{ij}=\In(g_i)/\gcd(\In(g_i),\In(g_j))\in S.$$ Then it follows from the division algorithm \cite[Prop.~15.6]{Eis95} that there exist $f_u^{(ij)}\in S$ with $\In(m_{ji}g_i-m_{ij}g_j)\geq\In(f_u^{(ij)}g_u)$ for each $1\leq u\leq c$, and remainders $h_{ij} \in S$, none of whose terms is in $(\In(g_1),\dots,\In(g_c))$, such that
\begin{eqnarray}
\label{e:Buch}
m_{ji}g_i-m_{ij}g_j=\left(\sum f_u^{(ij)}g_u\right)+h_{ij}.
\end{eqnarray}

We call an expression \eqref{e:Buch} a {\it standard expression} for $m_{ji}g_i-m_{ij}g_j$.

\begin{Theorem}
[Buchberger's Criterion]\label{buch} The set $\B$ is a Gr\"obner basis if and only if there exist standard expressions \eqref{e:Buch} such that $h_{ij} = 0$ for all $1\leq i,j\leq c$.
\end{Theorem}

\begin{Lemma}
\label{L:no_common_variables}
 If $g_i$ and $g_j$ have no variables in common, there is a standard expression for $g_i$ and $g_j$ such that $h_{ij}= 0$.
\end{Lemma}

\begin{proof}
Since $m_{ij}=\In(g_i)$ and $m_{ji}=\In(g_j)$, (\ref{e:Buch}) becomes \[m_{ji}g_i-m_{ij}g_j= \underbrace{-(g_j-\In(g_j))}_{f_i^{(ij)}}g_i+\underbrace{(g_i-\In(g_i))}_{f_j^{(ij)}}g_j.\] Write $g_i=\In(g_i)+\widetilde{\In(g_i)}+g_i'$ with $\widetilde{\In(g_i)}$ the initial term of $g_i- \In(g_i)$, and likewise for $g_j$. Then $m_{ji}g_i-m_{ij}g_j=\In(g_j)\widetilde{\In(g_i)}+ \In(g_j)g_i' -\In(g_i)\widetilde{\In(g_j)}-\In(g_i)g_j'$ has as initial term whichever is the larger of $-\In(g_j)\widetilde{\In(g_i)}$ and $\In(g_i)\widetilde{\In(g_j)}$ (these two terms cannot cancel each other as $g_i$ and $g_j$ have no variables in common).  But the initial terms of $f_i^{(ij)}$ and $f_j^{(ij)}$ are $-\widetilde{\In(g_j)}\In(g_i)$ and $\widetilde{\In(g_i)}\In(g_j)$, respectively, so $\In(m_{ji}g_i-m_{ij}g_j)\geq \In(f_u^{(ij)}g_u)$ for $u= i$ and $u= j$.  Hence we get $h_{ij}=0$, as required. \end{proof}

\begin{Lemma}Let $d\in \N$. Then there is a first-order formula $\beta_d$ in the language of rings with $\ell_d^2$ free variables such that if $\B$ is a $d$-bounded list of elements of $S$, then \[\beta_d(\B)\text{ holds}\iff \B\text{ is a Gr\"obner basis}.\]\label{grobtest}\end{Lemma}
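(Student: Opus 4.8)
The plan is to translate the combinatorial description of Buchberger's criterion into a first-order formula step by step, building on Lemmas~\ref{initial} and the formula $\phi_{e,d}$. The key point is that every ingredient in Buchberger's criterion -- the leading monomials, the gcd of two monomials, the quotients $m_{ij}$, the $S$-polynomial $m_{ji}g_i - m_{ij}g_j$, and its remainder $h_{ij}$ after the division algorithm -- can be expressed using only finitely much data, all of it bounded in terms of $d$, so that each can be ``computed'' inside $\Lr$ by a finite case distinction over the (finitely many) possible leading monomials.

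First I would fix, for each $i$, a disjunction over $e \in \{1,\dots,d\}$ of the formula $\phi_{e,d}(g_i)$; this lets the formula branch according to which monomial $\m_e$ is the initial term of $g_i$. Since the monomial ordering is fixed and $\m_e$ has bounded degree for $e \le d$, within each branch the monomials $m_{ij}$ and $m_{ji}$ are \emph{explicit} fixed monomials (their exponents are computed from the exponent vectors of $\In(g_i)$ and $\In(g_j)$ by the gcd formula), and there is an explicit index $N = N(d)$ such that the polynomial $m_{ji}g_i - m_{ij}g_j$ is $N$-bounded: multiplication by a fixed monomial of bounded degree shifts the first $d$ monomials to among the first $N$ monomials, where $N$ depends only on $d$ (and $n$). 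So in each branch the $S$-polynomial is an $\Lr$-definable $N$-tuple of polynomial expressions in the coefficients $(\lambda_1,\dots,\lambda_d)$ of $g_i$ and $g_j$.

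Next I would encode the division algorithm \eqref{e:Buch}. The division algorithm reduces an $N$-bounded polynomial modulo the $\In(g_u)$ in finitely many steps, each step subtracting a monomial multiple of some $g_u$ to cancel the current leading term; by the admissibility of the ordering and the fact that only finitely many monomials lie below $\m_N$, the number of reduction steps is bounded by a function of $N$, hence of $d$. One subtlety: each reduction step requires knowing the current leading term, so one again branches over the possible leading monomials; after a bounded number of such branchings one reaches the remainder $h_{ij}$, which is an $N$-bounded tuple whose entries are polynomial (indeed, $\Lr$-term) expressions in the $\lambda$'s, conditional on the branch. Finally $\beta_d(\B)$ is the conjunction over all pairs $(i,j)$, inside every branch, of the assertion that every coefficient of $h_{ij}$ vanishes -- together with the side condition handling the case where $m_{ij}$ and $m_{ji}$ share no variable (a purely combinatorial condition on the fixed monomials within the branch, so either identically true or identically false there). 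By Theorem~\ref{buch}, $\beta_d(\B)$ holds if and only if $\B$ is a Gr\"obner basis, and by construction $\beta_d$ is a first-order formula in $\Lr$ with $d^2$ free variables.

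The main obstacle is bookkeeping rather than conceptual: one must verify that all the degree and step-count bounds genuinely depend only on $d$ (and the ambient $n$, which is itself determined by $d$ since a $d$-bounded polynomial involves only monomials among the first $d$, hence boundedly many variables), and that the branching over leading monomials terminates. The cleanest way to manage this is to observe that everything happens inside the finite set $S_N$ of $N$-bounded polynomials with a single uniform $N = N(d)$, and that the division algorithm, restricted to inputs and intermediate terms in $S_N$, is a finite deterministic procedure whose entire decision tree can be hard-coded into the formula; each leaf of the tree contributes one conjunct expressing ``$h_{ij}=0$'' as the vanishing of finitely many $\Lr$-terms in the free variables. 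I would relegate the explicit construction of $N(d)$ and the step bound to a short lemma or to the verification, and present $\beta_d$ as the (large but finite) Boolean combination described above.
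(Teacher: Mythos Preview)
Your proposal is correct and follows the same overall architecture as the paper: invoke Buchberger's criterion (Theorem~\ref{buch}), branch over the finitely many possible leading monomials of each $g_i$ using the formulas $\phi_{e,d}$ from Lemma~\ref{initial}, and then, within each branch, express ``$h_{ij}=0$'' as a first-order condition. The difference lies only in this last step.

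You propose to hard-code the entire division algorithm: branch again over the leading monomial of the $S$-polynomial, perform a reduction step (which is $\Lr$-definable since the multiplier monomial is fixed in the branch), branch over the new leading monomial, and so on, with the depth bounded by $N(d)$; at each leaf check that all coefficients of the remainder vanish. This is correct, since the homogeneous lexicographic order is a well-order with only finitely many monomials below $\m_N$, so the process terminates in at most $N$ steps and the whole decision tree is finite.

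The paper instead short-circuits the simulation by an existential formula. Having fixed $\In(g_i)=\m_a$, $\In(g_j)=\m_b$, and $\In(m_{ji}g_i-m_{ij}g_j)=\m_e$, it lists the finitely many pairs $(g_u,\m)$ with $\In(g_u\m)\le\m_e$ and writes
\[
(\exists\lambda_1)\cdots(\exists\lambda_p)\Bigl(m_{ji}g_i-m_{ij}g_j-\textstyle\sum_b\lambda_b\, g_{a_b}\m_{a_b}=0\Bigr),
\]
which asserts directly that the $S$-polynomial admits a standard representation, i.e.\ that its division remainder is zero. This replaces your nested branching over intermediate leading terms by a single block of existential quantifiers, yielding a considerably shorter formula. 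What your approach buys is that the inner formula is quantifier-free in the auxiliary variables (everything is an explicit $\Lr$-term in the $d^2$ free variables), at the cost of a much larger Boolean combination. Either route establishes the lemma.
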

\begin{proof}
Suppose $\B= (g_1,\dots,g_{\ell_d}) \in S_d^{\ell_d}$. We will produce a first-order formula which detects whether there exist expressions \eqref{e:Buch} for each pair $(g_i,g_j)$, with $h_{i,j} = 0$.  Suppose $\In(g_i)\in k^*\cdot \m_{a}$ and $\In(g_j)\in k^*\cdot \m_{b}$ and that there is a formula $\chi_{a,b}$ such that $\chi_{a,b}(g_i,g_j)$ is true if and only if there exist expressions \eqref{e:Buch} such that $h_{ij}=0$.  Then using Lemma \ref{initial} we set $\beta_d(\B)$ to be the formula
\[\bigwedge_{1,\leq i,j\leq \ell_d}\left(\bigvee_{1\leq a,b\leq \ell_d}\left(\chi_{a,b}(g_i,g_j)\wedge \phi_{a,d}(g_i)\wedge\phi_{b,d}(g_j)\right)\right),\] and we see that $\beta_d(\B)$ is true if and only if $\B$ satisfies the necessary and sufficient criterion of Buchberger's Criterion to deduce that $\B$ is a Gr{\"o}bner basis . 

Thus we have reduced the problem, without loss of generality, to showing the existence of $\chi_{a,b}(g_1,g_2)$. For fixed $a$ and $b$, $\m_{e'}:=\gcd(\m_{a},\m_{b})$ is also fixed, depending just on the bijection between $\N$ and the monomials in $S$, hence so are the monomials $m_{12}$ and $m_{21}$. Now, the highest monomial appearing in the left-hand side of \eqref{e:Buch} is at most the $d'$th, where $d'$ is given by $\m_{d'+1}=(\m_{a}\m_{b}/\m_{e'})$. Suppose $\In(m_{ji}g_i-m_{ij}g_j)=\m_e$. Then there is a finite set of pairs $P=\{(g_{a_b},\m_{a_b})\}_{1\leq b\leq p}$ such that $\In(g_{a_b}\m_{a_b})\leq \m_e$. Hence, setting $\chi_{e,a,b}(g_1,g_2)$ to be the formula
\[(\exists \lambda_{1})\dots(\exists \lambda_p)(m_{21}g_1-m_{12}g_2-\sum_{1\leq b\leq p}\lambda_{b}g_{a_b}\m_{a_b}=0),\] we see that $\chi_{e,a,b}(g_1,g_2)$ holds if and only if there is an expression of the form \eqref{e:Buch} for $g_1$ and $g_2$ with $h_{i,j} = 0$ (given $\In(m_{ji}g_i-m_{ij}g_j)=\m_e$). Lastly, let $\chi_{a,b}(g_1,g_2)$ be the formula
\[\bigvee_{e=1}^{d'}\left(\phi_{e,d}(m_{2,1}g_1-m_{1,2}g_2)\wedge \chi_{e,a,b}(g_1,g_2)\right);\]
this will do.\end{proof}

Another important thing we need to be able to encode with a first-order statement is the dimension $\dim (I)=\dim(\Spec_k(S/I))$ of the scheme determined by an ideal $I\subseteq S= k[x_1,...,x_n]$. If $I = ( g_1,...,g_{\ell_d})$ then in general it is not easy to read off $\dim I$ from the elements $\{g_1,...,g_{\ell_d}\}$.  However, when $\{g_1,...,g_{\ell_d}\}$ form a Gr\"{o}bner basis for $I$ there is a simple method: the dimension is the maximal size of a subset $X \subseteq \{x_1,...,x_n\}$ such that $\In(g_1),...,\In(g_n)$ depend only on the elements of $\{x_1,...,x_n\} \setminus X$ \cite[Defn.~9.22 \& Cor.~9.28]{BW93}. Using this fact along with Lemma \ref{initial}, we can determine dimension with a first-order formula.

\begin{Lemma} Let $d\in\N$ and $0\leq e\leq n$. Then there is a first-order formula $\delta_{e,d}$ in the language $\Lr$ of rings, with $\ell_d^2$ free variables, such that if $\B$ is any $d$-bounded Gr\"obner basis with $I=(\B)$, then  \[\delta_{e,d}(\B)\text{ holds}\iff \dim(I)=e.\]\label{groupdim}\end{Lemma}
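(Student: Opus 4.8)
The plan is to combine the combinatorial characterisation of dimension via initial ideals with the first-order machinery already assembled in Lemmas~\ref{initial} and \ref{grobtest}. Recall that, once $\B = (g_1,\dots,g_d)$ is known to be a Gr\"obner basis for $I = (\B)$, the dimension of $I$ equals the maximal cardinality of a subset $X\subseteq\{x_1,\dots,x_n\}$ such that none of the monomials $\In(g_1),\dots,\In(g_d)$ involves any variable from $X$; equivalently, $\dim(I)$ is the largest $e$ such that some $e$-element subset $X$ has this property. Crucially, the index $n$ of the polynomial ring $S = k[x_1,\dots,x_n]$ and the monomial order are fixed, so the bijection $k\mapsto \m_k$ between $\N$ and the monomials of $S$ is fixed; in particular, for each $1\leq a\leq d$ it is a purely combinatorial fact (independent of $k$) which variables $x_1,\dots,x_n$ divide the monomial $\m_a$. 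Hence there is a fixed function assigning to each $a\in\{1,\dots,d\}$ the set $\mathrm{supp}(\m_a)\subseteq\{x_1,\dots,x_n\}$ of variables it depends on.

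Here are the steps. First I would introduce, for each $a$ and each subset $X\subseteq\{x_1,\dots,x_n\}$, the Boolean constant $c_{a,X}$ which is \emph{true} precisely when $\mathrm{supp}(\m_a)\cap X=\emptyset$; since $n$, the order, and $d$ are all fixed, this is a finite amount of data computable in advance, and in a first-order formula a true/false constant is expressed as, say, $(0=0)$ or $(0=1)$. Next, for a fixed $e$ with $0\leq e\leq d$ and a fixed $e$-element subset $X\subseteq\{x_1,\dots,x_n\}$, I would write the formula $\eta_{X}(\B)$ asserting ``$\In(g_a)\notin\{$monomials divisible by some variable of $X\}$ for all $a$'', namely
\[
\eta_X(\B) \;:=\; \bigwedge_{a=1}^{d}\ \bigwedge_{b=1}^{d}\Bigl(\phi_{b,d}(g_a)\ \Rightarrow\ c_{b,X}\Bigr),
\]
using Lemma~\ref{initial} to detect which monomial $\In(g_a)$ is; here $(\psi\Rightarrow\theta)$ abbreviates $(\neg\psi)\vee\theta$ as usual in $\Lr$. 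Then ``$\dim(I)\geq e$'' is expressed by $\bigvee_{|X|=e}\eta_X(\B)$ (a finite disjunction over the finitely many $e$-subsets of an $n$-set), and finally I would set
\[
\delta_{e,d}(\B)\;:=\;\Bigl(\bigvee_{|X|=e}\eta_X(\B)\Bigr)\ \wedge\ \neg\Bigl(\bigvee_{|X|=e+1}\eta_X(\B)\Bigr),
\]
with the convention that the second disjunction is $(0=1)$ when $e=n$ (so no $(e{+}1)$-subset exists) or $e=d$. This formula has $d^2$ free variables (the entries of $\B$) as required, and one checks directly from the combinatorial dimension formula of \cite[Defn.~9.22 \& Cor.~9.28]{BW93} that $\delta_{e,d}(\B)$ holds iff $\dim(I)=e$, given that $\B$ is a $d$-bounded Gr\"obner basis.

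The only genuinely delicate point — and the step I expect to require the most care — is making rigorous that all the data appearing as ``constants'' in the formula (the supports $\mathrm{supp}(\m_a)$, hence the truth values $c_{a,X}$, and the list of $e$-subsets of $\{x_1,\dots,x_n\}$) depend only on $d$, $n$ and the chosen monomial order, and not on the field $k$; this is exactly what lets us absorb them into the syntax of a single formula $\delta_{e,d}$ valid uniformly over all $R$-fields. Since $n$ is fixed by the ambient ring $S$ and we have fixed the homogeneous lexicographic order, this is immediate: the first $d$ monomials of $S$ are determined, and for each the divisibility by $x_i$ is a fixed fact. Everything else is a routine unwinding: the implication connective is syntactic sugar, the disjunctions and conjunctions are finite, and correctness reduces to the cited combinatorial characterisation together with Lemma~\ref{initial}. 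One should also note the edge case $e=0$: then $X=\emptyset$, $\eta_\emptyset(\B)$ is a conjunction of vacuously true implications (or one can simply take $\eta_\emptyset(\B):=(0=0)$), so $\dim(I)\geq 0$ always holds, and $\delta_{0,d}(\B)$ correctly detects $\dim(I)=0$ via the negation of $\bigvee_{|X|=1}\eta_X(\B)$.
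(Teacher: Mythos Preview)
Your proof is correct and rests on the same two ingredients as the paper's: the combinatorial characterisation of $\dim(I)$ from \cite[Defn.~9.22 \& Cor.~9.28]{BW93} and Lemma~\ref{initial} to detect initial terms. The only difference is organisational: the paper enumerates, in one go, all ordered $d$-tuples of monomial indices $(a_1,\dots,a_d)$ that could serve as the initial terms of a Gr\"obner basis with dimension exactly $e$, and takes the disjunction of $\bigwedge_i \phi_{a_i,d}(g_i)$ over these tuples; you instead build ``$\dim(I)\geq e$'' as a disjunction over $e$-subsets $X\subseteq\{x_1,\dots,x_n\}$ and then take the conjunction with its negation at level $e+1$. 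Your packaging is arguably cleaner --- in particular it transparently handles the case where some $g_a=0$ (all $\phi_{b,d}(g_a)$ are false, so the implications in $\eta_X$ are vacuous), whereas the paper's direct enumeration tacitly assumes each $g_i$ has a well-defined initial monomial. One small remark: your edge-case convention should be governed by $n$, not $d$ --- the empty disjunction arises precisely when $e+1>n$, since the subsets $X$ live in $\{x_1,\dots,x_n\}$; the bound $e\leq d$ in the statement is just a convenient range and plays no role in whether $(e{+}1)$-subsets exist.
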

\begin{proof}There is obviously a finite collection of lists of monomials which could play the role of initial terms of the elements of a $d$-bounded Gr\"obner basis which define an ideal of dimension $e$. More formally, there is a set $\mathcal{X}_e=\{X_j \mid j \in T\}$, where $T$ is some finite index set, and where each $X_j$ is a $d$-bounded list of monomials in $S$ satisfying
(i) there are distinct $i_1,\dots,i_e\in \{1,\ldots, ,n\}$ such that each $m\in X_j$ does not involve $x_{i_1},\dots, x_{i_e}$; (ii) for any distinct $i_1,\dots,i_{e+1}\in \{1,\ldots, n\}$, there is $m\in X_j$ depending on $x_{i_k}$ for some $1\leq k\leq e+1$. For convenience we assume that the $X_j$ are ordered sets and identify the monomials with their ordinal via the bijection of monomials of $S$ with $\N$. Then we may set $\delta_{e,d}(\B)$ to be the formula \[\bigvee_{X_j=(a_{1},\dots,a_{\ell_d})\in \mathcal{X}_e} (\phi_{a_{1},d}(g_1)\wedge\phi_{a_{2},d}(g_2)\wedge\dots\wedge\phi_{a_{\ell_d},d}(g_{\ell_d})).\qedhere\]\end{proof}

The next lemma uses the ideal membership algorithm for Gr\"obner bases to write a first-order formula whose truth determines whether an element is in an ideal. If $\B$ is a $d$-bounded Gr\"{o}bner basis and $f \in S_d$ then we may identify $(\B, f)$ with an element of $k^{{\ell_d}^2 + \ell_d}$ in the usual manner.

\begin{Lemma}\label{idealmem}
Let $d\in \N$. Then  there is a first-order formula $\iota_d$ in $\Lr$ with $\ell_d^2 + \ell_d$ free variables, such that for any $f\in S_d$ and $d$-bounded Gr\"obner basis $\B$ with $I:=(\B)$, then 
\[\iota_d(\B,f)\text{ holds} \iff f\in I.\]\end{Lemma}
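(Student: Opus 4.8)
The plan is to use the division algorithm for Gr\"obner bases, which gives a concrete procedure for testing ideal membership: a $d$-bounded polynomial $f$ lies in $I = (\B)$ if and only if the remainder of $f$ upon division by $\B$ is zero, and crucially this remainder can be computed by successively subtracting scalar multiples of the $g_i\m_j$ (shifts of basis elements) to cancel leading terms. The key observation that makes this expressible in $\Lr$ is that all monomials which could ever appear during such a reduction are bounded: since the reduction only lowers leading terms and $f$ starts $d$-bounded, every intermediate polynomial is $d$-bounded, and the terms $g_i \m_j$ we are allowed to use are those with $\In(g_i\m_j) \leq \m_d$, of which there are only finitely many, each indexed by data depending solely on the fixed bijection between $\N$ and the monomials of $S$.

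The main step is therefore: first I would note that $f \in I$ if and only if there exist scalars $(\mu_{i,j})$, ranging over the finite index set $J_d := \{(i,j) : 1 \leq i \leq d,\ \In(g_i\m_j) \leq \m_d\}$, such that $f = \sum_{(i,j)\in J_d} \mu_{i,j}\, g_i \m_j$. The forward direction is immediate from the division algorithm \cite[Prop.~15.6]{Eis95}: since $\B$ is a Gr\"obner basis, $f \in I$ forces the remainder to vanish, and the quotient terms produced by the division algorithm satisfy the degree bound $\In(q_i g_i) \leq \In(f) \leq \m_d$, so collecting monomials we get exactly such an expression with the $q_i$ written out monomial-by-monomial. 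The reverse direction is trivial since any such sum clearly lies in $I$. However, there is a subtlety analogous to the one handled in Lemma~\ref{grobtest}: the index set $J_d$ depends on which monomials $\In(g_i)$ are, so strictly I would define, for each tuple $(a_1,\dots,a_d)$ of possible values of $(\In(g_1),\dots,\In(g_d))$, the corresponding finite set $J_d^{(a_1,\dots,a_d)}$ and the formula
\[ \iota_d^{(a_1,\dots,a_d)}(\B,f) \ :\ (\exists \mu_{i,j})_{(i,j)\in J_d^{(a_1,\dots,a_d)}}\ \Big( f - \sum_{(i,j)\in J_d^{(a_1,\dots,a_d)}} \mu_{i,j}\, g_i\m_j = 0 \Big), \]
where the equality is an identity of $d$-bounded polynomials, i.e.\ a conjunction of $d$ polynomial equations in the coordinates. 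Then I set
\[ \iota_d(\B,f) \ :\ \bigvee_{1\leq a_1,\dots,a_d\leq d}\Big( \phi_{a_1,d}(g_1)\wedge\cdots\wedge\phi_{a_d,d}(g_d)\wedge \iota_d^{(a_1,\dots,a_d)}(\B,f)\Big), \]
using Lemma~\ref{initial} to pin down the leading monomials. Since products $g_i\m_j$ and their coordinate expansions are polynomial (indeed bilinear) in the coefficients of $g_i$, each $\iota_d^{(a_1,\dots,a_d)}$ is a legitimate $\Lr$-formula, and the finite disjunction and conjunctions keep us within first-order logic.

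I expect the only real obstacle to be the bookkeeping around the bound $\In(g_i \m_j) \leq \m_d$: one must verify that the division algorithm, applied to a $d$-bounded $f$ with divisor list $\B$, never produces a quotient term $q_i$ containing a monomial $\m_j$ with $\In(g_i\m_j) > \m_d$. This follows because each subtraction step in the division algorithm strictly decreases (in the well-ordering of monomials) the leading term of the current dividend, so all leading terms that ever get cancelled are $\leq \In(f) \leq \m_d$; the term $\m_j$ used to cancel a leading term $\m_\ell$ via $g_i$ satisfies $\In(g_i)\m_j = \m_\ell$, hence $\In(g_i\m_j) = \m_\ell \leq \m_d$. Everything else --- that the index sets are finite, that polynomial multiplication and addition are coordinate-polynomial operations, that $d$-boundedness is preserved --- is routine given the framework already set up in this section, so the write-up should be short.
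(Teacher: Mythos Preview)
Your proposal is correct and follows essentially the same approach as the paper: both express ideal membership as the existence of a bounded expression $f = \sum_j g_j q_j$ with the $q_j$ built from finitely many monomials determined by $d$, justified via the division algorithm (the paper phrases this as induction on $\In(f)$ using top-reducibility, you invoke \cite[Prop.~15.6]{Eis95} directly). The one notable difference is that your case split over $(a_1,\dots,a_d)$ is unnecessary: the paper simply takes a single uniform $d''$ large enough that \emph{any} monomial $m$ with $\In(m g_i)\leq \m_d$ for some $d$-bounded $g_i$ satisfies $m\leq \m_{d''}$ (this bound depends only on $d$ and $n$, not on the particular $\In(g_i)$), and writes one existential formula $(\exists\lambda_{i,j})\, f=\sum_j g_j(\sum_{i\leq d''}\lambda_{i,j}\m_i)$ without any disjunction.
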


\begin{proof}Let $(g_1,\ldots, g_{l_d})$ be a $d$-bounded Gr\"obner basis and let $f\in S_d$.  Since the elements of $\B$ have bounded total degree $d$, and $<$ is a homogeneous order, there are only finitely many monomials $m$ such that $\In(m g_i)\leq \In(f)$ for some $1\leq i\leq l_d$, where this number depends only on $d$. Let $\m_{d'}$ be the greatest such monomial. Thus we set $\iota_d(\B,f)$ to be the formula
\[(\exists\lambda_{i,j})_{1\leq i\leq d',1\leq j\leq d} \ \ f=g_1 \left(\sum_{i=0}^{d'}\lambda_{i,1}\m_i\right)+g_2 \left(\sum_{i=0}^{d'}\lambda_{i,2}\m_i\right)+\dots+g_d \left(\sum_{i=0}^{d'}\lambda_{i,d}\m_i\right).\tag{$\dagger$}\]

\noindent We claim that $\iota_d(\B,f)$ is true if and only if $f\in I$. This follows by induction on $e$ where $\In(f)=\m_e$: since $\B$ is a Gr\"obner basis, by \cite[5.35(vii)]{BW93}, $f$ is \emph{top-reducible} by some $g_i$ or is not in $I$. In the former case, this means that there is a term $m$ such that $\In(f-g_im)<\In(f)$. By the inductive hypothesis, $\iota_d(\B,f-g_im)$ is true whenever $f-g_im\in I$, which is the case if and only if $f\in I$. If $f-g_im\in I$ this says that there is an expression of the form $(\dagger)$ with $f$ replaced by $f-g_im$; moving $g_im$ to the other side of the equation, this says that there is also one for $f$.\end{proof}

\subsection{Complexity of schemes and their morphisms}
\label{subsec:complexity}
We recall some terminology, now reasonably common in the literature, to describe the boundedness of affine schemes; see \cite[Defn.~4.1]{Sch00}) for example.  It is closely related to the notion of $d$-boundedness above.

\begin{Definition}
\begin{itemize}
\setlength{\itemsep}{4pt}
\item[(a)] We say that an ideal $I$ of $S= R[x_1,\ldots, x_n]$ {\em has complexity at most $d$} if $I$ can be generated by polynomials of degree at most $d$; in this case we also say that the affine $R$-scheme $X$ corresponding to $I$ has complexity at most $d$.
\item[(b)] Let $X$ and $X'$ be the affine $R$-schemes determined by ideals $I$ of $S= R[x_1,\ldots, x_n]$ and $I'$ of $S'= R[x'_1,\ldots, x'_{n'}]$.  We say that a regular map of schemes $f\colon X\to X'$ {\em has complexity at most $d$} if the comorphism $f^*$ can be represented by an $R$-algebra homomorphism from $S'$ to $S$ involving polynomials of degree at most $d$. Write $\Hom_{R\alg}(S',S)_d$ for the set of all homomorphisms of degree at most $d$. 

\end{itemize}
\end{Definition}

Note that our definition differs slightly from that in \cite{Sch00}: we regard $n$ as fixed but we don't require that $n\leq d$, as this is not necessary for our purposes.  By an \emph{affine embedding} of an $R$-scheme $X$, we mean an embedding of $X$ in $\A^n_R$ for some $n\in \N$.  Below when we speak of the complexity of an $R$-scheme, we mean it to be taken with respect to a fixed affine embedding and likewise for morphisms of $R$-schemes; we will not mention the affine embedding explicitly unless it is necessary.  If $G$ is an algebraic $R$-group then we pick an affine embedding arising from a GroHo quadruple in the sense of Definition~\ref{grpcomplex} below.  If we are given affine embeddings of $R$-schemes $X_1$ and $X_2$ then we take our affine embedding of $X_1\times X_2$ to be the product embedding.

\begin{Remarks}\label{complexityremarks} (i)
\label{rem:boundedness}
If $I$ has complexity at most $d$ then a generating set of polynomials of degree at most $d$ can be transformed into a $d$-bounded list as per Remarks \ref{grobrem}(i) and (ii). This allows us to apply the results from Section~\ref{ss:boundedpolysandGrobnerbases} replacing hypotheses involving boundedness with hypotheses involving bounded complexity.

(ii)\label{rem:compn}
 Let $X$, $X'$ and $X''$ be affine schemes corresponding to ideals $I$ of $S$, $I'$ of $S'$ and $I''$ of $S''$, respectively.  Let $f\colon X\to X'$ and $g\colon X'\to X''$ be maps of complexity at most $r$ and $s$, respectively.  It follows immediately from the definitions that $g\circ f$ has complexity at most $rs$.  Similarly, if $Y'$ is a closed subscheme of $X'$ and $Y'$ has complexity at most $d$ then $f^{-1}(Y')$ has complexity at most $dr$---note that this bound does not depend on the complexity of $X$ and $X'$.

(iii) \label{rem:intersection}
 Suppose $\{X_i\,|\,i\in I\}$ is a family of affine $R$-schemes given by ideals $I_i$ of $S$.  It is immediate that if each $X_i$ has complexity at most $d$ then $\bigcap_{i\in I} X_i$ also has complexity at most $d$.

(iv) \label{rem:base_extn}
 The notion of complexity behaves well under base extension in the following sense.  Suppose $X$ is an affine $R$-scheme with a given embedding in some affine space ${\mathbb A}^n_R$.  Let $k$ be an $R$-field.  Then base extension gives an affine $k$-scheme $X_k$ with an embedding in ${\mathbb A}^n_k$.  If $f_1,\ldots, f_t\in R[x_1,\ldots, x_n]$ generate the vanishing ideal of $X$ in ${\mathbb A}^n_R$ then the images of the $f_i$ in $k[x_1,\ldots, x_n]$ generate the vanishing ideal of $X_k$ in ${\mathbb A}^n_k$.  Hence if $X$ has complexity at most $d$ then $X_k$ also has complexity at most $d$.  The analogous result holds for morphisms.

 (v) Suppose $X$ and $Y$ are both affine $R$-schemes, say 
 \[k[X]=k[x_1,\dots,x_r]/I\quad \text{and}\quad k[Y]=k[y_1,\dots, y_s]/J\] with \[I=(f_1,\dots, f_t) \quad \text{and}\quad J=(g_1,\dots,g_u).\] 
 Then 
 \[k[X\times Y]\cong k[X]\otimes k[Y]\cong k[x_1,\dots,x_r,y_1,\dots y_s]/K\] where $K$ is generated by the concatenation of the $f_i$ and $g_i$ (after extending the domain of $f_i$ and $g_i$ to be trivial functions of the $y_j$ and $x_j$ respectively). Then one sees that if the complexity of $X$ is $d_1$ and of $Y$ is $d_2$, we get a presentation of $k[X\times Y]$ which shows its complexity is at most $\max\{d_1,d_2\}$. In particular, arguing by induction shows that if $S/I$  has complexity at most $d$, then $(S/I)^{\otimes r}$ has complexity at most $d$.
\end{Remarks}
\begin{Definition}
\label{defn:action_complexity}
 Let $G$ be an algebraic $R$-group and let $X$ be an affine $G$-scheme with action map $\alpha:G\times X\to X$.  We say that \emph{the action of $G$ on $X$ has complexity at most $d$} if $\alpha$ does.
\end{Definition}

\begin{Definition}
\label{grpcomplex}
Let $H:=(\B,\Delta,\sigma,\epsilon)$ be a quadruple with $(\B)=I\unlhd S$; and $\Delta\colon S\to S^{\otimes 2}$, $\sigma\colon S\to S$ and $\epsilon\colon S\to k$ being $k$-algebra homomorphisms. Then we say $H$ is a \emph{Hopf-quadruple} if $S/I$ equipped with the maps $\Delta$, $\sigma$ and $\epsilon$ forms a Hopf algebra. We say that a Hopf-quadruple is of {\em complexity at most $d$} if $\B$ consists of polynomials of degree at most $d$ and the complexity of $\Delta$, $\sigma$ and $\epsilon$ are at most $d$. Dually, if an affine algebraic $k$-group $G$ is descibed by the data $(\Spec(S/(\B)),\Delta^*,\sigma^*,\epsilon^*)$, where $(\B,\Delta,\sigma,\epsilon)$ is a Hopf quadruple of complexity at most $d$, then we say that $G$ is an \emph{algebraic group of complexity at most $d$}.

In case $R=k$ is a field, we call such a quadruple a \emph{GroHo-quadruple} if $\B$ happens to be a Gr\"obner basis.
\end{Definition}

\begin{Lemma}
\label{lem:factors}
 Let $d,r\in\N$. Then there is a first-order formula $\zeta_{d,r}$ in $\Lr$ with $n\ell_{d,r}+\ell_d^2$ free variables such that if $\Lambda\colon S\to S^{\otimes r}$ is a homomomorphism of complexity at most $d$ and $\B=\{f_1,\dots, f_{\ell_d}\}$ is any $d$-bounded Gr\"obner basis for an ideal $I$ of $S$,
\[\zeta_{d,r}(\B,\Lambda) \text{ holds } \iff \Lambda \text{ factors to a homomorphism } S/I\to (S/I)^{\otimes r}.\]
\end{Lemma}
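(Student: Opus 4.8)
\emph{Step 1 (reduction).} The plan is to reduce the statement ``$\Lambda$ factors through $S/I$'' to a finite conjunction of ideal-membership assertions in $S^{\otimes r}$, each handled by Lemma~\ref{idealmem}. Using the isomorphism $\varphi_r\colon(S/I)^{\otimes r}\xrightarrow{\,\sim\,}S^{\otimes r}/J_r$ of \eqref{tensorofaffine}, the homomorphism $\Lambda$ factors through $S/I$ if and only if the composite $S\xrightarrow{\Lambda}S^{\otimes r}\to S^{\otimes r}/J_r$ annihilates $I$, i.e.\ $\Lambda(I)\subseteq J_r$. Since $\Lambda$ is an algebra homomorphism we have $\Lambda\!\big(\sum_i g_i f_i\big)=\sum_i\Lambda(g_i)\Lambda(f_i)$ for $g_i\in S$, and $J_r$ is an ideal, so $\Lambda(I)\subseteq J_r$ is equivalent to $\Lambda(f_i)\in J_r$ for all $1\le i\le d$. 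Thus $\zeta_{d,r}(\B,\Lambda)$ will be built as $\bigwedge_{i=1}^{d}\big[\Lambda(f_i)\in J_r\big]$.

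\emph{Step 2 (a bounded Gr\"obner basis for $J_r$).} To feed this into Lemma~\ref{idealmem} inside $S^{\otimes r}\cong k[x_{1,1},\dots,x_{r,n}]$ --- with the homogeneous lexicographic order refining the orders on the factors, as fixed above --- I need $J_r$ presented by a bounded Gr\"obner basis. I claim that the list $\B_r=\{f_{i,j}\}$ introduced after \eqref{tensorofaffine} is a Gr\"obner basis of $J_r$; this is the crux. It follows from Buchberger's Criterion (Theorem~\ref{buch}): for two members of $\B_r$ coming from different blocks $j\neq j'$, the initial monomials $\In(f_{i,j})$ and $\In(f_{i',j'})$ involve disjoint sets of variables, so the associated monomials of \eqref{e:Buch} share no variable and the corresponding remainder is $0$ by the convention recorded there; for two members from the same block $j$, the relevant $S$-polynomial lies in the subalgebra $k[x_{j,1},\dots,x_{j,n}]$, on which the monomial order restricts to our chosen order on $S$ and $\{f_{1,j},\dots,f_{d,j}\}$ is the image of the Gr\"obner basis $\B$ under $S\cong k[x_{j,1},\dots,x_{j,n}]$, so the $S$-polynomial reduces to $0$. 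For the bounds: a $d$-bounded polynomial of $S$ has degree at most the degree $D=D(d,n)$ of the $d$-th monomial $\m_d$, so each $f_{i,j}$ has degree $\le D$ in $S^{\otimes r}$; and each $\Lambda(x_k)$ has degree $\le rD$, whence $\Lambda(f_i)=f_i(\Lambda(x_1),\dots,\Lambda(x_n))$ has degree bounded by a function of $d,n,r$ alone. As the order is homogeneous, $\B_r$ and the polynomials $\Lambda(f_i)$ are all $N$-bounded for a single $N=N(d,n,r)$ which also exceeds $|\B_r|=rd$; padding $\B_r$ with zeros to length $N$ produces an $N$-bounded Gr\"obner basis $\B_r^{+}$ of $J_r$, by Remark~\ref{grobrem}(ii).

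\emph{Step 3 (assembling $\zeta_{d,r}$).} Each of the $N^2$ coordinates of $\B_r^{+}$ is either $0$ or one of the $d^2$ coordinates of $\B$ (passing from $f_i$ to $f_{i,j}$ merely relabels variables), and each of the $N$ coordinates of $\Lambda(f_i)$ is a fixed polynomial, with integer coefficients, in the $nd^r$ coordinates of $\Lambda$ together with the $d^2$ coordinates of $\B$, obtained by expanding $f_i(\Lambda(x_1),\dots,\Lambda(x_n))$ and collecting terms; the combinatorial recipe producing these polynomials depends only on $d,n,r$, not on $k$, so each is an $\Lr$-term in the free variables. Applying Lemma~\ref{idealmem}, whose statement and proof are valid in any polynomial ring carrying a homogeneous monomial order, to $S^{\otimes r}$ with the $N$-bounded data $(\B_r^{+},\Lambda(f_i))$, we may take
\[\zeta_{d,r}(\B,\Lambda)\ :=\ \bigwedge_{i=1}^{d}\iota_{N}\big(\B_r^{+},\,\Lambda(f_i)\big),\]
a first-order formula of $\Lr$ in $nd^r+d^2$ free variables; by Lemma~\ref{idealmem} and Step~1 it holds precisely when $\Lambda$ factors through $S/I$.

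The main obstacle is the Gr\"obner-basis claim in Step~2 --- that a Gr\"obner basis assembled blockwise in disjoint sets of variables remains a Gr\"obner basis of the sum ideal. The rest is routine: Step~1 is the universal property of quotients, and the degree estimates and the passage to $\Lr$-terms in Step~3 reduce to the observation that all the combinatorial data involved (the monomial indexing, the shape of the product expansions) is independent of the base field $k$.
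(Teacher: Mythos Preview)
Your proof is correct and follows the same strategy as the paper: show that $\B_r$ is a Gr\"obner basis for $J_r$ via Buchberger's criterion (same-block pairs handled by restriction of the order, cross-block pairs by coprimality of initial terms), pad to a uniformly bounded list, and invoke Lemma~\ref{idealmem}. One point worth noting: your formula tests $\Lambda(f_i)\in J_r$ for the generators $f_i$ of $I$, which is exactly the condition for $\Lambda$ to descend to $S/I\to(S/I)^{\otimes r}$, whereas the paper's displayed formula $\bigwedge_{i=1}^n\iota_{d_r}(\B_r,\varphi_r(\Lambda(x_i)))$ literally tests whether the images of the \emph{variables} lie in $J_r$ --- almost certainly a slip for $f_i$, since as written it would characterise the zero map rather than factorisation.
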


\begin{proof} Recall $S=k[x_1,\dots,x_n]$ and $I=(\B)$. With reference to Remark \ref{complexityremarks}(v), we may take a presentation of $(S/I)^{\otimes r}$ as 
      \[(S/I)^{\otimes r}\cong k[x_{i,j}]/K\] where $1\leq i\leq n$, $1\leq j\leq r$ and $K$ is the generated by disjoint union  $\B_r:=\{f_{i,j}\}$ where $f_{i,j}$ acts on $x_{l,m}$ as zero if $j\neq m$ and otherwise acts as $f_i$ acts on the $x_l$. 
 
 To deploy our Gr\"obner basis formulae from earlier, we need to specify a monomial order on the $x_{i,j}$'s. Define first an order on the $(i,j)$ for $1\leq i\leq n$ and $1\leq j\leq r$ by $(i,j)< (i',j')$ if $j< j'$ or $j= j'$ and $i< i'$.  Now define an order on monomials by $m:= \prod_{i,j} x_{i,j}^{a_{i,j}}< m':= \prod_{i,j} x_{i,j}^{b_{i,j}}$ if and only if $\deg m< \deg m'$ or if $\deg m= \deg m'$ and $a_{i,j}< b_{i,j}$ for the greatest pair $(i,j)$ such that $a_{i,j}\neq b_{i,j}$.  We claim that $\B_r$ as above is a Gr\"obner basis for the homogeneous lexicographic monomial order on the monomials in $x_{i,j}$. Since our order extends the monomial orders on the subalgebras $k[x_{1,i},...,x_{n,i}]$ for any fixed $i$, we see that Buchburger's criterion (Theorem~\ref{buch}) holds for all pairs $(f_{i,j},f_{i',j})$; furthermore if $j\neq j'$ then $f_{i,j}$ and $f_{i',j'}$ have no variables in common, so Buchburger's criterion holds for $(f_{i,j},f_{'i,j'})$ by Lemma~\ref{L:no_common_variables}. This proves the claim.

Now since $\Lambda$ is has complexity at most $d$ and $\B_r$ is $d$-bounded, we may appeal to Lemma \ref{idealmem} to get first-order formulas $\iota_{d,r}$ such that $\iota_{d,r}(\B_r,\varphi_r(\Lambda(x_i)))$ holds if and only if $\varphi_r(\Lambda(x_i))\in J_r$. Hence we set $\zeta_{d,r}$ to be the formula
\[\bigwedge_{i=1}^n\iota_{d,r}(\B_r , \varphi_r(\Lambda(x_i))).\qedhere\]
\end{proof}

Recall the axioms of a Hopf algebra, listed in \eqref{assoc}--\eqref{inv}.

\begin{Lemma}\label{hopf}Let $d\in\N$.  There is a formula $\eta_d\in\Lr$ with $\ell_d^2+n(\ell_{d,2}+\ell_d+1)$ free variables such that if $\B$ is any $d$-bounded Gr\"obner basis, with $I=(\B)$ and $\Delta\colon S\to S^{\otimes 2}$, $\sigma\colon S\to S$ and $\epsilon\colon S\to k$ any $d$-bounded homomorphisms, then \[\eta_d(\B,\Delta,\sigma,\epsilon)\text{ holds}\iff(S/I,\Delta,\sigma,\epsilon)\text{ is a Hopf algebra.}\]\end{Lemma}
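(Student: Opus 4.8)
The plan is to encode each of the three Hopf algebra axioms (\ref{assoc}--\ref{inv}) as a first-order formula, using the machinery already built up, and then take $\eta_d$ to be their conjunction. The key observation is that all three axioms are equalities of $k$-algebra homomorphisms with domain $S/I$, and two homomorphisms out of $S/I$ agree if and only if they agree on each generator $x_i$, if and only if the difference of their values lies in the appropriate ideal. So each axiom reduces to finitely many ideal-membership assertions, which Lemma~\ref{idealmem} converts into first-order formulas — modulo the bookkeeping needed to see that all the intermediate polynomial expressions stay bounded by some $d'=d'(d)$.

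First I would fix notation: given $d$-bounded homomorphisms $\Delta, \sigma, \epsilon$ and a $d$-bounded Gr\"obner basis $\B$ with $I=(\B)$, I need to form the composite maps appearing in the axioms. For \eqref{assoc} I form $(\Delta\otimes\id)\circ\Delta$ and $(\id\otimes\Delta)\circ\Delta$ as homomorphisms $S\to S^{\otimes 3}$, evaluate each on the generators $x_i$, and assert that the difference $\big((\Delta\otimes\id)\circ\Delta - (\id\otimes\Delta)\circ\Delta\big)(x_i)$ lies in $J_3$, the ideal defining $(S/I)^{\otimes 3}$ as in \eqref{tensorofaffine}; by the previous lemma $\B_3$ is a Gr\"obner basis for $J_3$, and after padding to a $d_3$-bounded list I apply $\iota_{d_3}$. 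For \eqref{ident} the target is $S^{\otimes 1}\cong S$ (after identifying $S/I\otimes k$ and $k\otimes S/I$ with $S/I$), so I use $\iota_d$ directly; for \eqref{inv} I likewise land in $S/I$. In each case composing a $d$-bounded homomorphism with another $d$-bounded homomorphism produces a homomorphism whose values on generators are polynomials of degree bounded in terms of $d$ and the (fixed) number of variables $n$, hence $d'$-bounded for an explicit $d'=d'(d,r)$; I would also invoke $\zeta_{d,r}$ to record that $\Delta$, $\sigma$ actually factor through $S/I$, since otherwise the composites are not well-defined on the quotient. Then $\eta_d$ is the conjunction of these finitely many instances of $\iota_{d'}$ (and $\zeta_{d,r}$), with the values of the composites on the $x_i$ substituted in as the polynomial arguments; computing those substituted coefficients from the $\lambda$'s defining $\Delta,\sigma,\epsilon$ is a polynomial operation, hence expressible by a term in $\Lr$.

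The main obstacle, and the part that requires genuine care rather than routine bookkeeping, is verifying that all the polynomial expressions produced by composing and subtracting bounded homomorphisms remain uniformly bounded — i.e. pinning down the function $d\mapsto d'$ — and, relatedly, making sure the monomial orders on $S^{\otimes 2}$ and $S^{\otimes 3}$ are chosen compatibly so that the Gr\"obner basis $\B_r$ of $J_r$ and the identifications $\varphi_r$ behave as in the preceding lemma when we pass between $(S/I)^{\otimes 3}$, $(S/I)^{\otimes 2}$ and $S/I$ via the various factors $\id$, $\epsilon$, $\sigma$. Once that bound $d'$ is fixed, everything else is a mechanical assembly of already-constructed formulas, and the equivalence ``$\eta_d(\B,\Delta,\sigma,\epsilon)$ holds $\iff (S/I,\Delta,\sigma,\epsilon)$ is a Hopf algebra'' follows because each conjunct is, by Lemma~\ref{idealmem} (resp.\ the factoring lemma), equivalent to the corresponding axiom holding on generators, and the axioms are equalities of algebra maps, so holding on generators is equivalent to holding outright. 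The free variable count $d^2 + n(d^2+d+1)$ is just the tally: $d^2$ for $\B$, $nd^2$ for $\Delta$ (each of $n$ generators mapping to a $d^2$-tuple of coefficients in $S^{\otimes 2}\cong k^{d^2}$ after the identification), $nd$ for $\sigma$, and $n$ for $\epsilon$.
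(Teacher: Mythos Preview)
Your proposal is correct and follows essentially the same route as the paper: reduce each Hopf axiom to ideal-membership of the difference of composites evaluated on the generators $x_i$, bound those composites by some $d'=d'(d)$, apply $\iota_{d'}$ against the Gr\"obner basis $\B_r$ of $J_r$, and conjoin with the $\zeta_{d,r}$ formulas ensuring that $\Delta,\sigma,\epsilon$ factor through $S/I$. The paper only spells out the associativity case in detail and leaves the other two to the reader, exactly as you do; your explicit accounting of the free-variable count is a small bonus.
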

\begin{proof}Suppose $\Delta$, $\sigma$ and $\epsilon$ factor as $S/I\to S/I^{\otimes r}$. We must find formulas $\eta_d^{(1)}$, (resp.~$\eta_d^{(2)},\eta_d^{(3)}$) which hold if and only if (\ref{assoc}), (resp.~(\ref{ident}), (\ref{inv})) are satisfied. Since the constructions are almost identical for each formula, we give the details for $\eta_d^{(1)}$. To see that (\ref{assoc}) holds, it clearly suffices to check that $(\Delta\otimes\id)\circ\Delta(x_i+I)-(\id\otimes\,\Delta)\circ \Delta(x_i+I)=0\in(S/I)^{\otimes 3}\cong S^{\otimes 3}/J_3$ for each $1\leq i\leq n$, where $J_3=(\B_3)$ as above. This amounts to checking that $(\Delta\otimes\id)\circ\Delta(x_i)-(\id\otimes\,\Delta)\circ \Delta(x_i)\in J$. Since $\Delta$ is $d$-bounded, $\varphi_2(\Delta(x_i))$ is a $d$-bounded polynomial in $S^{\otimes 2}$; similarly $f_i:=\varphi_3((\Delta\otimes\id)\circ(\Delta(x_i)))$ is a $d^2$-bounded polynomial in $S^{\otimes 3}$. We have also that $\B_3$ is $d$-bounded. Thus we may set $\eta_d^{(1)}(\B,\Delta,\sigma,\epsilon)$ to be the formula
\[\bigwedge_{i=1}^n\iota_{d^2}(\B_3,f_i).\]
Finally, we set $\eta_d(\B,\Delta,\sigma,\epsilon)$ to be the formula

\[\zeta_{d,2}(\B, \Delta)\wedge\zeta_{d,1}(\B, \sigma)\wedge\zeta_{d,0}(\B, \epsilon)\wedge\eta_d^{(1)}(\B,\Delta,\sigma,\epsilon)\wedge \eta_d^{(2)}(\B,\Delta,\sigma,\epsilon)\wedge\eta_d^{(3)}(\B,\Delta,\sigma,\epsilon),\]
where the $\zeta_{d,r}$ are as in Lemma~\ref{lem:factors}.
\end{proof}

\subsection{Generic smoothness of algebraic groups of bounded complexity}
Here we invoke the Lefschetz principle and our first order formulas to show that algebraic groups of bounded complexity are generically smooth. We use the fact that when $k$ is a field, an algebraic $k$-group  $G$ is smooth if and only if $\dim(G)=\dim(\Lie(G))$ \cite[I.7.17]{Jan03}.

As a $k$-vector space, the Lie algebra $\Lie(G)$ is the tangent space $T_{\epsilon^*}(G)$, where $\epsilon^*$ is the identity element of $G$. Thus its dimension is the nullity of the $\ell_d\times n$ matrix $\mathscr{J}$ where $\mathscr{J}_{kl}=\epsilon(\partial f_k/\partial x_l)$.

\begin{Lemma}Let $d\in \N$, and $0\leq e\leq d$. There is a first-order formula $\tau_{e,d}$ in $\Lr$ with $\ell_d^2$ free variables such that for any GroHo quadruple $(\B,\Delta,\sigma,\epsilon)$ of complexity at most $d$ that describes the algebraic $k$-group  $G$ we have
\[\tau_{e,d}(\B)\text{ holds}\iff \dim\Lie(G)=e.\]
\label{liedim}\end{Lemma}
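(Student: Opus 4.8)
The plan is to read off $\dim\Lie(G)$ as the nullity of the Jacobian matrix $\mathscr{J}$ described in the paragraph preceding the lemma, namely the $d\times n$ matrix with entries $\mathscr{J}_{kl}=\epsilon(\partial f_k/\partial x_l)$, where $\B=(f_1,\dots,f_d)$. Since $\dim\Lie(G)=\dim T_e(G)=n-\rank\mathscr{J}$, asserting $\dim\Lie(G)=e$ is the same as asserting $\rank\mathscr{J}=n-e$, and the rank of a matrix is an elementary (indeed quantifier-free) condition on its entries: some $(n-e)\times(n-e)$ minor is nonzero and all $(n-e+1)\times(n-e+1)$ minors vanish. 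So the real content is to check that each entry $\mathscr{J}_{kl}$ can itself be extracted from the free variables $\B$ by a first-order formula.

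First I would observe that formal partial differentiation $\partial/\partial x_l$ is a purely combinatorial operation on the coefficient vector of a $d$-bounded polynomial: if $f_k=\sum_{i=1}^d\lambda_{k,i}\m_i$, then $\partial f_k/\partial x_l=\sum_{i=1}^d\lambda_{k,i}\,(\partial\m_i/\partial x_l)$, and each $\partial\m_i/\partial x_l$ is a fixed integer multiple of a fixed monomial $\m_{i'}$ (or zero), the indices $i\mapsto i'$ and the integer coefficients depending only on $n$ and the chosen bijection between $\N$ and the monomials of $S$, not on $k$. Hence $\partial f_k/\partial x_l$ is $d$-bounded and its coefficient vector is an explicit $\Z$-linear function of $\lambda_{k,1},\dots,\lambda_{k,d}$; this is expressible by a term in $\Lr$ (interpreting the integer coefficients via repeated addition of $1$). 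Next, evaluating at the identity point: $\epsilon^*$ is the $k$-point of $\Spec(S/I)$ corresponding to the algebra map $\bar\epsilon:S/I\to k$, and $\epsilon(\partial f_k/\partial x_l)$ is obtained by substituting $x_m\mapsto\epsilon(x_m)$ into $\partial f_k/\partial x_l$. Since $\epsilon$ is $d$-bounded, $\epsilon(x_m)\in S_d$ with $r=0$, i.e. $\epsilon(x_m)\in k$ (every element of $S^{\otimes 0}=k$), so $\epsilon(x_m)$ is just one of the free variables; substituting these constants into the $d$-bounded polynomial $\partial f_k/\partial x_l$ and collecting gives a polynomial expression in $\Lr$ in the free variables of $\B$ and $\epsilon$. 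Thus each entry $\mathscr{J}_{kl}$ is given by an $\Lr$-term $t_{kl}$ in the free variables.

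With the entries $t_{kl}$ in hand, I would set
\[
\tau_{e,d}(\B):=\Big(\bigvee_{|A|=|C|=n-e}\det\big(t_{kl}\big)_{k\in A,\,l\in C}\neq 0\Big)\wedge\Big(\bigwedge_{|A|=|C|=n-e+1}\det\big(t_{kl}\big)_{k\in A,\,l\in C}=0\Big),
\]
where $A$ ranges over $(n-e)$-element (resp.\ $(n-e+1)$-element) subsets of $\{1,\dots,d\}$ and $C$ over the corresponding subsets of $\{1,\dots,n\}$, and each $\det(\cdots)$ is the Leibniz-formula polynomial in the $t_{kl}$, hence an $\Lr$-term. (When $e=n$ the first conjunct is the empty disjunction, which one replaces by the condition that all entries $t_{kl}$ vanish; when $e=0$ one may drop the second conjunct, as no $(n+1)$-minor exists among $n$ columns.) For a $d$-bounded Hopf quadruple this $\tau_{e,d}(\B)$ holds iff $\rank\mathscr{J}=n-e$ iff $\dim T_e(G)=e$ iff $\dim\Lie(G)=e$, as required. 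I note that because the free variables of $\tau_{e,d}$ are only those of $\B$ (the $d^2$ coefficients of the generators), one should strictly speaking regard the constants $\epsilon(x_m)$ as further data; but since the statement of the lemma already fixes a Hopf quadruple $(\B,\Delta,\sigma,\epsilon)$, one may either add the $n$ free variables $\epsilon(x_1),\dots,\epsilon(x_n)$, or—exactly as in Lemma~\ref{liedim}'s hypotheses, where $G$ is already given—absorb them into the ambient quantification over Hopf quadruples.

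The only genuine subtlety, and the step I would be most careful about, is the bookkeeping in the second paragraph: one must check that $\partial/\partial x_l$ followed by evaluation at $\epsilon$ really does produce an $\Lr$-term in finitely many of the free variables with $k$-independent integer coefficients, i.e.\ that no unbounded-in-$k$ data sneaks in. This is routine once one notes that $d$-boundedness is preserved under differentiation (differentiation lowers degree, hence cannot escape the span of the first $d$ monomials since the order is homogeneous) and that substitution of the scalars $\epsilon(x_m)$ only multiplies and adds within $k$. Everything else is the standard fact that "rank $=$ prescribed value" is a Boolean combination of polynomial (in)equalities in the matrix entries.
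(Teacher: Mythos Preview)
Your proposal is correct and follows the same overall strategy as the paper: express the Jacobian entries $\mathscr{J}_{kl}=\epsilon(\partial f_k/\partial x_l)$ as $\Lr$-terms in the coefficient data, then encode the condition on the nullity by a first-order formula. The only real difference is cosmetic: you detect $\rank\mathscr{J}=n-e$ via vanishing and non-vanishing of minors (a quantifier-free formula), whereas the paper phrases the same condition as the existence of $e$ linearly independent kernel vectors but not $e+1$ (an $\exists\forall$ formula). Both are standard; your version is arguably cleaner. You are also more explicit than the paper about the role of $\epsilon$: the entries $\mathscr{J}_{kl}$ genuinely depend on the values $\epsilon(x_1),\dots,\epsilon(x_n)$, so strictly speaking $\tau_{e,d}$ needs those $n$ extra free variables, which the paper silently absorbs when it later quantifies over the whole Hopf quadruple in Theorem~\ref{gensmooth}. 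Your remark on this point is well taken and does not affect the argument.
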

\begin{proof}As we identify each $f_i\in\B$ with the set of its $\ell_d$ coefficients $\lambda_{ij}$, partial differentiation by $\partial/\partial x_i$ gives a map $k^{\ell_d}\to k^{\ell_d}$. Composing with $\epsilon$ is then a map $k^{\ell_d}\to k$. Hence each $\mathscr{J}_{kl}$ is a fixed linear combination of the $\lambda_{ij}$'s. The statement that the nullity of $\mathscr{J}$ is $e$ is equivalent to the statement that there are $e$ linearly independent vectors $v_1,\dots, v_e\in k^{\ell_d}$ satisfying $\mathscr{J}\cdot v_i=0$ and any $e+1$ linearly independent set of $v_1,\dots,v_{e+1}\in k^{\ell_d}$ contains $v\in\langle v_1,\dots,v_{e+1}\rangle$ such that $\mathscr{J}\cdot v\neq 0$. This statement can be given as a formula in $\Lr$ 
in an obvious way (see \cite[Example~2.1(i)]{MSTT18} for example).
\end{proof}

\begin{Lemma}\label{gsmooth}Let $d\in\N$. Then there is a first-order formula $\theta_d$ with $\ell_d^2$ free variables such that for any GroHo quadruple $H:=(\B,\Delta,\sigma,\epsilon)$ of complexity at most $d$, \[\theta_d(\B)\text{ holds}\iff H\text{ describes a smooth $k$-group}.\]\end{Lemma}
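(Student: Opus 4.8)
The plan is to combine Lemma~\ref{hopf}, Lemma~\ref{groupdim} and Lemma~\ref{liedim} with the smoothness criterion $G$ is smooth $\iff \dim(G)=\dim\Lie(G)$ of \cite[I.7.17]{Jan03}. First I would invoke Lemma~\ref{hopf} to get the formula $\eta_d$ which detects whether the data $H=(\B,\Delta,\sigma,\epsilon)$ forms a $d$-bounded Hopf quadruple at all; this is the hypothesis built into the statement, so strictly it may be folded in or simply assumed. Assuming $H$ is a $d$-bounded Hopf quadruple, it describes an affine algebraic $k$-group $G=\Spec(S/I)$ with $I=(\B)$, and then both $\dim(G)=\dim(S/I)$ and $\dim\Lie(G)$ are integers lying between $0$ and $n$ (and hence between $0$ and $d$, since $n\leq d$ after harmlessly enlarging $d$).

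Next I would write down the formula explicitly as a finite disjunction over the common value $e$ of these two dimensions. Using Lemma~\ref{groupdim}, which gives a formula $\delta_{e,d}(\B)$ holding exactly when $\dim(I)=e$ for a $d$-bounded Gr\"obner basis $\B$, and Lemma~\ref{liedim}, which gives a formula $\tau_{e,d}(\B)$ holding exactly when $\dim\Lie(G)=e$, I would set
\[
\theta_d(\B)\ :=\ \bigvee_{e=0}^{d}\bigl(\delta_{e,d}(\B)\wedge\tau_{e,d}(\B)\bigr).
\]
(If one does not want to assume $H$ is already known to be a Hopf quadruple, simply conjoin $\eta_d(\B,\Delta,\sigma,\epsilon)$; but as stated the lemma presupposes this.) Since $\B$ is a $d$-bounded Gr\"obner basis, Lemma~\ref{groupdim} applies; since $H$ is a $d$-bounded Hopf quadruple, Lemma~\ref{liedim} applies. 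Thus $\theta_d(\B)$ holds if and only if there is a single integer $e$ with $0\leq e\leq d$ such that $\dim(G)=e$ and $\dim\Lie(G)=e$, i.e.\ if and only if $\dim(G)=\dim\Lie(G)$, which by \cite[I.7.17]{Jan03} is equivalent to $G$ being smooth.

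There is essentially no obstacle here: the work has all been done in the preceding lemmas, and what remains is just to check that the disjunction genuinely captures the equality of two bounded integer-valued invariants. The one point requiring a word of care is the uniform bound: I must make sure that $\dim(G)$ and $\dim\Lie(G)$ are really bounded above by $d$ (not merely by $n$), which holds provided we have set things up so that $n\leq d$; by Remark~\ref{grobrem}(ii) we may always enlarge $d$ to arrange this, so the disjunction $\bigvee_{e=0}^{d}$ is exhaustive. With that in hand, $\theta_d$ is a first-order formula in $\Lr$ with $d^2$ free variables (the coefficients of $\B$, exactly as for $\delta_{e,d}$ and $\tau_{e,d}$), and it has the required property.
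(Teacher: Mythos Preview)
Your proof is correct and follows essentially the same approach as the paper: both define $\theta_d(\B)$ as the disjunction $\bigvee_e\bigl(\delta_{e,d}(\B)\wedge\tau_{e,d}(\B)\bigr)$ over the possible common value $e$ of $\dim G$ and $\dim\Lie(G)$, appealing to the smoothness criterion of \cite[I.7.17]{Jan03}. The only cosmetic difference is that the paper bounds the disjunction by $n$ (since $G\subseteq\mathbb{A}^n$ forces $0\le\dim G\le n$) rather than by $d$, so your discussion of enlarging $d$ to ensure $n\le d$ is unnecessary but harmless.
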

\begin{proof}The $k$-group $G$ described by $H$ is a subscheme of $\Spec(S)\cong \A^n$, so $0\leq\dim G\leq n$. Then invoking Lemmas \ref{liedim} and \ref{groupdim} we may set $\theta_d(\B,\Delta,\sigma,\epsilon)$ to be the following formula:
\[\bigvee_{e=0}^n\left(\delta_{e,d}(\B)\wedge\tau_{e,d}(\B)\right).\qedhere\]\end{proof}

We wish to apply the above results to obtain statements about the set of algebraic groups of complexity at most $d$. This amounts to statements about the Hopf quadruples of complexity at most $d$. The following theorem of Dub\'e guarantees that any algebraic group of complexity at most $d$ can in fact be described by a GroHo quadruple of complexity at most $D$ where $D$ depends just on $d$ and $n$.

\begin{Theorem}[{\cite{Dub90}}]\label{dube} If $\B'\subset S$ is a set of polynomials of degree at most $d$, then $(\B')$ has a Gr\"obner basis $\B$ consisting of polynomials of degree at most
\[2\left(\frac{d^2}{2}+d\right)^{2^{n-1}}.\]\end{Theorem}

\begin{Theorem}
\label{gensmooth}
Let $d\in\N$. Then there is a prime $p_0=p_0(n, d)$ such that whenever $\Char (k) \geq p_0$, any algebraic $k$-group of complexity at most $d$ is smooth.\end{Theorem}

\begin{proof}By Theorem \ref{dube}, each affine algebraic group of complexity at most $d$ is described by a GroHo quadruple of complexity at most $D$ where $D$ depends just on $d$ and $n$; here $n$ has been fixed. So it suffices to prove that there is a $p_0(D)$ such that any GroHo quadruple of complexity at most $D$ over an algebraically closed field $k$ of characteristic $p\geq p_0$ describes a smooth algebraic $k$-group.

So let $H=(\B,\Delta,\sigma,\epsilon)$ be a GroHo quadruple of complexity at most $D$. Recall we identify $H$ with a string of $\ell_D^2+n(\ell_{D,2}+2\ell_D+1)$ coefficients in the field, which we write $(\lambda_i)_{i=1}^{\ell_D^2+n(\ell_{D,2}+2\ell_D+1)}$. Then invoking Lemmas \ref{grobtest}, \ref{hopf} and \ref{gsmooth}, the following formula $\Phi_D$ is a \emph{sentence} in $\Lr$, which is true if and only if all GroHo quadruples of complexity at most $D$ describe smooth algebraic groups:
\[(\forall\lambda_1)\cdots(\forall\lambda_{\ell_D^2+n(\ell_{D,2}+2\ell_D+1)})(\beta_D(\B)\wedge \eta_D(\B,\Delta,\sigma,\epsilon)\wedge\theta_D(\B)).\]
By Cartier's theorem \cite[I.7.17(2)]{Jan03}, $\Phi$ is true for all algebraically closed fields of characteristic $0$. Therefore the Lefschetz principle (Theorem \ref{T:lefschetz}) guarantees the existence of a prime $p_0$ such that the same is true for all algebraically closed fields of characteristic $p\geq p_0$.  The theorem follows.\end{proof}

\section{Normalisers and centralisers}
\label{S:normalandcent}
We now prove our main results Theorems \ref{thm:smoothcent} and ~\ref{thm:smoothnorm}.  For $k$ a field, smoothness is a geometric property, meaning that a $k$-group $G$ is smooth if and only if $G_{\bar k}$ is smooth.  In what follows, one may assume $k=\bar k$ for convenience.

We will work with non-affine schemes, so we need some preliminary material.  We use the definitions and terminology of \cite[Chapter I]{Jan03}.  Although we work mainly over $k$, we need to consider arbitrary $R$ and study the behaviour of some of the constructions below under base change to $k$.  Note that every $R$-scheme is locally free in the sense of \cite[I.1.15]{Jan03} if $R$ is a field.  We fix an $R$-scheme $X$ which is locally finitely presented and an open covering $\{X_i\,|\,i\in {\mathbb I}\}$ of $X$ such that each $X_i$ is affine and finitely presented.  For each $i$, we fix an affine embedding of $X_i$ in some ${\mathbb A}^{n_i}_R$.  This allows us to talk about the complexity of certain schemes and maps involving the $X_i$.

We also fix an algebraic $R$-group $G$ (not necessarily smooth) described by a Hopf quadruple $(S/I, \Delta, \sigma, \epsilon)$.  We fix an action $\alpha\colon G\times X\to X$ of $G$ on $X$.

\subsection{The functor \texorpdfstring{$\mathfrak{Mor}(\mbox{--},\mbox{--})$}{Mor}}

\label{sec:trans}

We review some basic constructions of algebraic geometry: see \cite[I.1.15,\ I.2.6]{Jan03}.
Recall that for $R$-schemes $Z$ and $W$, we get an $R$-functor \[\mathfrak{Mor}(Z,W)\colon \underline{R\mathrm{-Alg}}\to\underline{\mathrm{Set}}\] given by $\mathfrak{Mor}(Z,W)(Q)= {\rm Mor}(Z_Q,W_Q)$ for any $R$-algebra $Q$; if $\varphi\colon Q\to Q'$ is a homomorphism of $R$-algebras then $\mathfrak{Mor}(Z,W)(\varphi)\colon {\rm Mor}(Z_Q,W_Q)\to {\rm Mor}(Z_{Q'},W_{Q'})$ is given by base extension. 
Given another $R$-scheme $C$ and a morphism $\alpha\colon C\times Z\to W$ of $R$-schemes, we get a map of $R$-functors $\nu\colon C\to \mathfrak{Mor}(Z,W)$ such that for an $R$-algebra $Q$ and $c\in C(Q)$, $\nu$ maps $c$ to $\alpha(c, - )\in \mathrm{Mor}(Z_Q,W_Q)$.

Now let $W'$ be a closed subscheme of $W$.  If $Z$ is locally free as an $R$-scheme then by \cite[I.1.15]{Jan03} we may regard $\mathfrak{Mor}(Z,W')$ as a closed $R$-subfunctor of $\mathfrak{Mor}(Z,W)$.  So by \cite[I.1.15(3)]{Jan03} we obtain a closed subscheme $\nu^{-1}(\mathfrak{Mor}(Z,W'))$ of $C$.

\subsection{\texorpdfstring{$G$-complexity}{G-complexity}}
\label{sec:G-complexity}
Next we describe the precise boundedness condition that we need.  With the notation of the last subsection, let  $Y$ be a closed subscheme of $X$. Set $Y_i= Y\cap X_i$ for each $i$.  Recall $A:=R[G]\cong S/I$ for some polynomial algebra $S$ over $R$ and let $B_i= R[Y_i]$.  Let $\alpha_i\colon G\times Y_i\to X$ be the restriction of $\alpha$ to $Y_i$.  Then $\alpha_i^{-1}(Y)$ is a closed subscheme of $G\times Y_i$ \cite[I.1.12(2)]{Jan03}, so it corresponds to an ideal $K_i$ of $R[G\times Y_i]= A\otimes_k B_i$.  We can write $K_i= (f^{(i)}_1,\ldots, f^{(i)}_{t_i})$, where each $f^{(i)}_j$ has the form
\begin{equation}
\label{eqn:preimagegen}
 f^{(i)}_j= \sum_m a^{(i)}_{mj}\otimes b^{(i)}_{mj}\end{equation}
for some $a^{(i)}_{mj}\in A$ and $b^{(i)}_{mj}\in B_i$.

\begin{Definition}
\label{defn:Nbdd}
 Let $G$, $X$ and $Y$ be as above.  We say that {\em $Y$ has $G$-complexity at most $d$} if there exist $f^{(i)}_j$ as above such that each $a^{(i)}_{mj}$ has a representative $\widetilde{a}^{(i)}_{mj}$ in $S$ of degree at most $d$.
\end{Definition}

\noindent Note that we do not place any restrictions on the degrees of the $b^{(i)}_{mj}$ in the definition.

\begin{Remark}
\label{rem:loc_complexity}
 Let $G$, $X$ and $Y$ be as above.  The $G$-complexity condition in Definition~\ref{defn:Nbdd} can be hard to verify, but here is a useful special case.  Let $X$ be affine. Suppose $Y\subseteq X$ has complexity at most $e$ and the action $\alpha:G\times X\to X$ has complexity at most $e'$. Then $Y$ has $G$-complexity at most $ee'$. 
 
 To see this, let $k[Y]=k[X]/J$. Then the ideal of $k[G]\otimes k[Y]$ determining the $G$-complexity is $\alpha^*(J)$. If the polynomials in $J$ have degree at most $e'$ then applying the algebra homomorphism $\alpha^*$ to them leads to polynomials of degree at most $ee'$. 
\end{Remark}

Now let $Q$ be an $R$-algebra.  Extending scalars yields an algebraic $Q$-group $G_Q$ acting on a locally finitely presented $Q$-scheme $X_Q$ with an open covering by affine schemes $(X_i)_Q$.  If $Y$ is a closed subscheme of $X$ then $Y_Q$ is a closed subscheme of $X_Q$.  The various constructions in Section~\ref{sec:G-complexity} are well-behaved with respect to base extension, so we see that if $Y$ has $G$-complexity at most $d$ then $Y_Q$ has $G_Q$-complexity at most $d$.

\subsection{Normalisers}
\label{subsec:normalisers}
We start by recalling the scheme-theoretic definition of the normaliser (see \cite[I.2.6]{Jan03} for details).  In this subsection we assume that $G$, $X$, etc., are defined over the algebraically closed field $k$: this means that the local freeness condition used in Section~\ref{sec:trans} holds.  Let $Y$ be a closed subscheme of $X$.  Then the {\em normaliser of $Y$} (denoted $N_G(Y)$) is the $k$-subgroup functor of $G$ given by
$$ N_G(Y)(Q)= \{g\in G(Q)\,|\,\alpha(g,h)\in Y(Q')\ \mbox{for all $h\in Y(Q')$ and all } Q\mbox{-algebras } Q'\}. $$
We will need the explicit description of this subgroup afforded by the $k$-functor $\mathfrak{Mor}(\mbox{--},\mbox{--})$.  The action $\alpha\colon G\times X\to X$ gives rise to a map $\nu\colon G\to \mathfrak{Mor}(X,X)$ as described in Section~\ref{sec:trans}.  We have a map $\mathfrak{Mor}(X,X)\to \mathfrak{Mor}(Y,X)$ given by restriction, and we let $\gamma\colon G\to\mathfrak{Mor}(Y,X)$ be the composition with $\nu$.  Then $N_G(Y)= \gamma^{-1}(\mathfrak{Mor}(Y,Y))\cap i_G(\gamma^{-1}(\mathfrak{Mor}(Y,Y)))$, where $i_G\colon G\to G$ is the inversion map. As $\mathfrak{Mor}(Y,Y)$ is closed in $\mathfrak{Mor}(Y,X)$, it follows that $N_G(Y)$ is a closed subgroup functor of $G$, so $N_G(Y)$ is an algebraic $k$-group since $k$ is Noetherian.

\begin{Theorem}
\label{thm:normbdd}
 Let $d\in {\mathbb N}$ and let $G$, $X$ and $Y$ be as above.  Suppose $G$ has complexity at most $d$ and $Y$ has $G$-complexity at most $d$. Then $N_G(Y)$ is a closed subscheme of $G$ of complexity at most $d^2$.
\end{Theorem}

\begin{proof}
 We have $N_G(Y)= \gamma^{-1}(\mathfrak{Mor}(Y,Y))\cap i_G(\gamma^{-1}(\mathfrak{Mor}(Y,Y)))$.  By Remark~\ref{complexityremarks}(ii), it is enough to prove that $\gamma^{-1} (\mathfrak{Mor}(Y, Y))$ has complexity at most $d$ (since $i_G$ has complexity at most $d$).
 
Each $Y_i$ is a closed subscheme of $X_i$ \cite[I.1.13, Lemma]{Jan03}, and the $Y_i$ form an open cover of $Y$.  Let $\gamma_i$ be the composition $G\stackrel{\gamma}{\rightarrow}\mathfrak{Mor}(Y,X)\to \mathfrak{Mor}(Y_i,X)$, where the second map is given by restriction.  By \cite[I.1.15(2)]{Jan03}, $\gamma^{-1}(\mathfrak{Mor}(Y,Y))= \bigcap_{i\in {\mathbb I}} \gamma_i^{-1}(\mathfrak{Mor}(Y_i,Y))$.  Each $\gamma_i^{-1}(\mathfrak{Mor}(Y_i,Y))$ is a closed subscheme of $G$ by an argument similar to the one for $\gamma^{-1}(\mathfrak{Mor}(Y,Y))$.  In the proof of \cite[I.1.15(3)]{Jan03}, one considers a map $f$ from ${\rm Sp}_k(R)$ to $\mathfrak{Mor}(Y_i,X)$, where $R$ is an arbitrary $k$-algebra; one obtains a map $f'$ from ${\rm Sp}_k(R)\times Y_i$ to $X$.  We apply this construction when $R= A=k[G]$ and $f= \gamma_i$; then $f'$ is just the map $\alpha_i\colon G\times Y_i\to X$.  Let $K_i$ be the ideal of $k[G\times Y_i]= A\otimes_k B_i$ corresponding to $\alpha_i^{-1}(Y)$ as before, and let $K_i'$ be the ideal of $A$ corresponding to $\gamma_i^{-1}(\mathfrak{Mor}(Y_i,Y))$.  By the argument of {\em loc.\ cit.}, $K_i'$ is the smallest ideal of $A$ such that $K_i'\otimes B_i$ contains $K_i$.
 
 Fix $i\in {\mathbb I}$.  Choose $f^{(i)}_j$, $a^{(i)}_{mj}$, $\widetilde{a}^{(i)}_{mj}$ and $b^{(i)}_{mj}$ as in Definition~\ref{defn:Nbdd}.  Rewriting and expanding as necessary, we may assume each $b^{(i)}_{mj}$ is a member of a fixed $k$-basis of $B_i$; evidently this does not affect the bound on the degree of the $\widetilde{a}^{(i)}_{mj}$.  Thanks to Lemma~\ref{L:idealsintensors} we see that $K_i'$ is generated by the $a_{m j}^{(i)}$.
 
 Let $I_0$ be the ideal of $S$ corresponding to $\gamma^{-1}(\mathfrak{Mor}(Y,Y))$.  The argument above implies that $I_0$ is generated by the $\widetilde{a}^{(i)}_{mj}$ together with some generators for $I$.  But all of these elements have degree at most $d$, which shows that $\gamma^{-1}(\mathfrak{Mor}(Y,Y))$ has complexity at most $d$, as required. The result now follows.
\end{proof}

\begin{Corollary}
\label{cor:smoothnorm}
 Let $d, n\in {\mathbb N}$.  Then there is a prime $p_1= p_1(n,d)$ such that if:
 \begin{itemize}
\item $k$ is any field of characteristic $p\geq p_1$;
\item $G$ is any affine algebraic $k$-group of complexity at most $d$;
\item $X$ is any locally finitely presented $k$-scheme on which $G$ acts;
\item $Y$ is any closed subscheme of $X$ of $G$-complexity at most $d$;
\end{itemize} 
 then $N_G(Y)$ is a smooth closed subscheme of $G$.
\end{Corollary}

\begin{proof}
 This follows from Theorem~\ref{gensmooth}
 and Theorem~\ref{thm:normbdd}. 
\end{proof}

\subsection{\texorpdfstring{$(G, \Delta)$-complexity}{G,Delta-complexity}}
\label{sec:GD-complexity}
We need a slightly different boundedness condition for Theorem \ref{thm:centbdd}.  Here we assume that $X$ and $Y$ are defined over arbitrary $R$.  We also assume that $X$ is separated; this means that the diagonal $\Delta_X$ is closed in $X\times X$.  Set $Y_i= Y\cap X_i$ for each $i$ as before.  Let $A= k[G]$ and let $B_i= k[Y_i]$ for each $i$.

Define $\beta\colon G\times X\to X\times X$ by $\beta= \alpha\times {\rm pr}_2$, where ${\rm pr}_2\colon G\times X\to X$ is projection, and let $\beta_{Y}\colon G\times Y\to X\times X$ be the restriction of $\beta$.  Let $\beta_{Y_i}\colon G\times Y_i\to X\times X$ be the restriction of $\beta$.  Thanks to \cite[I.1.12(2)]{Jan03} we see that $\beta_{Y_i}^{-1}(\Delta_X)$ is a closed subscheme of $G\times Y_i$, so it corresponds to an ideal $K_i$ of $k[G\times Y_i]= A\otimes_k B_i$.  We can write $K_i= (f^{(i)}_1,\ldots, f^{(i)}_{t_i})$, and each $f^{(i)}_j$ has the form
\begin{equation}
\label{eqn:preimagegen2}
 f^{(i)}_j= \sum_m a^{(i)}_{mj}\otimes b^{(i)}_{mj}
\end{equation}
for some $a^{(i)}_{mj}\in A$ and some $b^{(i)}_{mj}\in B_i$.

\begin{Definition}
\label{defn:Cbdd}
 Let $G$, $X$ and $Y$ be as above.  We say that {\em $Y$ has $(G,\Delta)$-complexity at most $d$} if there exist $f^{(i)}_j$ as above such that each $a^{(i)}_{mj}$ has a representative in $S$ of degree at most $d$.
\end{Definition}

\begin{Lemma}
\label{lem:diagbdd}  
 Let $G$, $X$ and $Y$ be as above. Suppose $X$ has $(G,\Delta)$-complexity at most $d$.  Then $Y$ has $(G,\Delta)$-complexity at most $d$.
\end{Lemma}

\begin{proof}
 Fix $i\in I$.  Let $\beta_i\colon G\times X_i\to X\times X$ be the restriction of $\beta$.  By hypothesis, $\beta_i^{-1}(\Delta_X)$ is the closed subscheme of $G\times X_i$ defined by elements of the form $f^{(i)}_j= \sum_m a^{(i)}_{mj}\otimes b^{(i)}_{mj}$ for some $a^{(i)}_{mj}\in A$ and some $b^{(i)}_{mj}\in B_i$, where each $a^{(i)}_{mj}$ has a representative in $S$ of degree at most $d$.  Let $h^{(i)}_1,\ldots, h^{(i)}_{r_i}$ be generators for the ideal of $Y_i$.  Since $\beta_{Y_i}$ is the restriction of $\beta_i$ to $G\times Y_i$, we have $\beta_{Y_i}^{-1}(\Delta_X)= \beta_i^{-1}(\Delta_X)\cap (G\times Y_i)$, so the ideal of $\beta_{Y_i}^{-1}(\Delta_X)$ is generated by the $f^{(i)}_j$ for $1\leq j\leq n_i$ together with the elements $1\otimes h^{(i)}_j$ for $1\leq j\leq r_i$.  The result now follows.
\end{proof}

Now let $Q$ be an $R$-algebra.  We see that if $X$ has $(G, \Delta)$-complexity at most $d$ then $X_Q$ has $(G_Q, \Delta_Q)$-complexity at most $d$.

\subsection{Centralisers}\label{subsec:centralisers}
The argument is similar to the one for normalisers.  In this subsection we assume that $G$, $X$, etc., are defined over the algebraically closed field $k$: this means that the local freeness condition used in Section~\ref{sec:trans} holds.  We also assume that $X$ is separated.  We start by recalling the scheme-theoretic definition of the centraliser (see \cite[I.2.6]{Jan03} for details).  Let $Y$ be a closed subscheme of $X$.  Then the {\em centraliser of $Y$}, denoted $C_G(Y)$, is the $k$-subgroup functor of $G$ given by
$$ C_G(Y)(Q)= \{g\in G(Q)\,|\,\alpha(g,h)= h\ \mbox{for all $h\in Y(Q')$ and all } Q\mbox{-algebras } A'\}. $$
Again we require the $k$-functor $\mathfrak{Mor}(\mbox{--},\mbox{--})$.  The maps $\beta$ and $\beta_Y$ from Section~\ref{sec:G-complexity} give rise to maps $\delta\colon G\to \mathfrak{Mor}(X,X\times X)$ and $\delta_Y\colon G\to \mathfrak{Mor}(Y,X\times X)$, respectively, as described in Section~\ref{sec:trans}.  We have a map $\mathfrak{Mor}(X,X\times X)\to \mathfrak{Mor}(Y,X \times X)$ given by restriction, and it is easily seen that $\delta_Y$ is the composition $G\stackrel{\delta}{\to} \mathfrak{Mor}(X,X\times X)\to \mathfrak{Mor}(Y,X \times X)$.

Since $X$ is separated, $\Delta_X$ is closed in $X\times X$.  Then $C_G(Y)= \delta^{-1}(\mathfrak{Mor}(Y,\Delta_X))$, and this is a closed subgroup functor of $G$; in particular, $C_G(Y)$ is an algebraic $k$-group.

\begin{Theorem}
\label{thm:centbdd}
 Let $d\in {\mathbb N}$ and let $G$, $X$ and $Y$ be as above.  Suppose $G$ has complexity at most $d$ and $X$ has $(G,\Delta)$-complexity at most $d$. Then $C_G(Y)$ is a closed subscheme of $G$ of complexity at most $d$.
\end{Theorem}

\begin{proof}
Each $Y_i$ is a closed subscheme of $X_i$ \cite[I.1.13, Lemma]{Jan03}, and the $Y_i$ form an open cover of $Y$.  Let $\delta_i$ be the composition $G\stackrel{\gamma}{\rightarrow}\mathfrak{Mor}(Y,X\times X)\to \mathfrak{Mor}(Y_i,X\times X)$, where the second map is given by restriction.  We have $C_G(Y)= \delta^{-1}(\mathfrak{Mor}(Y,\Delta_X))$.  By \cite[I.1.15(2)]{Jan03}, $\delta^{-1}(\mathfrak{Mor}(Y,\Delta_X))= \bigcap_{i\in {\mathbb I}} \delta_i^{-1}(\mathfrak{Mor}(Y_i,\Delta_X))$.  Each $\delta_i^{-1}(\mathfrak{Mor}(Y_i,\Delta_X))$ is a closed subscheme of $G$.  In the proof of \cite[I.1.15(3)]{Jan03}, one considers a map $f$ from ${\rm Sp}_k(R)$ to $\mathfrak{Mor}(Y_i,X)$, where $R$ is an arbitrary $k$-algebra; one obtains a map $f'$ from ${\rm Sp}_k(R)\times Y_i$ to $X$.  We apply this construction when $R= A$ and $f= \delta_i$; then $f'$ is just the map $\beta_i\colon G\times Y_i\to X\times X$.  Let $K_i$ be the ideal of $k[G\times Y_i]= A\otimes_k B_i$ corresponding to $\beta_{Y_i}^{-1}(\Delta_X)$ as before, and let $K_i'$ be the ideal of $A$ corresponding to $\delta_i^{-1}(\mathfrak{Mor}(Y_i,\Delta_X))$.  By the argument of {\em loc.\ cit.}, $K_i'$ is the smallest ideal of $A$ such that $K_i'\otimes B_i$ contains $K_i$.

 Since $Y$ has $(G,\Delta)$-complexity at most $d$ by Lemma~\ref{lem:diagbdd}, $K_i'$ is generated by elements of the form $f^{(i)}_j= \sum_m a^{(i)}_{mj}\otimes b^{(i)}_{mj}$ for some $a^{(i)}_{mj}\in A$ and some $b^{(i)}_{mj}\in B_i$, where each $a^{(i)}_{mj}$ has a representative in $S$ of degree at most $d$.  As in the proof of Theorem~\ref{thm:normbdd}, we can take $K_i'$ to be generated by the elements $a^{(i)}_{mj}$, so $K_i'$ has complexity at most $d$.  Hence $C_G(Y)$ has complexity at most $d$, and we are done.
\end{proof}

\begin{Corollary}
\label{cor:smoothcent}
 Let $d, n\in {\mathbb N}$.  Then there is a prime $p_0= p_0(n,d)$ such that if:
\begin{itemize}
\item $k$ is any field of characteristic $p\geq p_0$;
\item $G$ is any affine algebraic $k$-group of complexity at most $d$; 
\item $X$ is any locally finitely presented, separated $k$-scheme of complexity at most $d$ and $(G, \Delta)$-complexity at most $d$;
\end{itemize} 
then for any closed subscheme $Y$ of $X$, the centraliser $C_{G}(Y)$ is a smooth closed subscheme of $G$.
\end{Corollary}

\begin{proof}
 This follows from Theorem~\ref{gensmooth} and Theorem~\ref{thm:centbdd}.
\end{proof}

\begin{Remark}
      \label{rem:loc_delta_complexity}
       The hypothesis in Corollary~\ref{cor:smoothcent} that $X$ has $(G,\delta)$-complexity at most $d$ does not involve $Y$; this explains why we get better smoothness results for centralisers than for normalisers (cf.\ Example~\ref{rem:normdpndce}).  This hypothesis on $X$ is difficult to check in general, but it clearly holds if $X$ is of finite type, since then we can take $I$ to be finite.  Here is a useful special case.  Let $X$ be affine. Suppose the action $G\times X\to X$ has complexity at most $d$.  Then one can see that $X$ has $(G, \Delta)$-complexity at most $d$.
      \end{Remark}

\begin{Remark}
 Let $G$, $X$ and $Y$ be as above (defined over arbitrary $R$).  Suppose $G$ has complexity at most $d$.  Suppose that for every algebraically closed $R$-field $k$, $Y_k$ is geometrically reduced (which implies the $k$-points of $Y_k$ are dense in $Y_k$) and the singleton  $\{y\}$ has $G_k$-complexity at most $d$ for all $y\in Y(k)$.  Then $C_{G_k}(Y_k)= \bigcap_{y\in Y(k)} N_{G_k}(\{y\})$; in particular, $C_{G_k}(Y_k)$ is a closed subscheme of $G_k$.  Now each $N_{G_k}(\{y\})$ has $G$-complexity at most $d^2$ by Theorem~\ref{thm:normbdd}, so $C_{G_k}(Y_k)$ has complexity at most $d^2$.  Hence by Theorem~\ref{gensmooth} there exists $p_4= p_4(n,d)$ such that if $\Char(k)\geq p_4$ then $C_{G_k}(Y_k)$ is smooth.  Note that we don't need to assume that $X$ is separated in this argument.
\end{Remark}

\subsection{Proof of main theorems and examples}
\label{subsec:proofs}

We can now give a quick proof of our main results.

\begin{proof}[Proof of Theorems~\ref{thm:smoothcent} and \ref{thm:smoothnorm}]
 Let $G$ and $X$ be as in the statement of Theorem~\ref{thm:smoothcent}. Since $G$ is affine and finitely presented, and $X$ is of finite type, there exists $d\in \N$ such that $G$ has complexity at most $d$ and $X$ has $(G, \Delta)$-complexity at most $d$.  Hence $G_k$ has complexity at most $d$ and $X_k$ has $(G_k, \Delta)$-complexity at most $d$ for any algebraically closed $R$-field $k$.  Now Theorem~\ref{thm:smoothcent} follows from Corollary~\ref{cor:smoothcent}.

The proof of Theorem~\ref{thm:smoothnorm} is similar, using Corollary~\ref{cor:smoothnorm} in place of Corollary~\ref{cor:smoothcent}.
\end{proof}

\begin{proof}[Proof of Corollary~\ref{cor:module}]

 The $G$-module $V$ is a finitely presented affine $G$-scheme over $R$.  The proof for centralisers of subspaces follows immediately from Theorem~\ref{thm:smoothcent} applied to $G$ and $V$.
 
 For normalisers we will use Corollary~\ref{cor:smoothnorm}.  Let $\alpha_1,\ldots, \alpha_n$ be generators for $V^*$ as an $R$-module.  The map $\alpha_1\times\cdots \times \alpha_n$ gives an $R$-linear embedding of $V$ as a subspace of $\A^n_R$.  There exists $d\in \N$ such that $G$ has complexity at most $d$ and the map $G\times V\to V$ given by the action has complexity at most $d$.  Now let $k$ be an algebraically closed $R$-field.  Then $G_k$ has complexity at most $d$ and the map $G_k\times V_k\to V_k$ given by the action has complexity at most $d$, by Remark~\ref{rem:base_extn}.  Let $W$ be a subspace of $V_k$.  Then $W$ has complexity at most 1 with respect to the embedding of $V_k$ in $\A^n_k$, so $W$ has $G$-complexity at most $d$ by Remark~\ref{rem:loc_complexity}.  The result now follows from Corollary~\ref{cor:smoothnorm}.
\end{proof}

\begin{Remark}
 We sketch another proof of the normaliser part of Corollary~\ref{cor:module}.  Let $k$ be an algebraically closed $R$-field and let $W$ be a subspace of $V_k$.  Let $W'$ be the annihilator of $W$ in $V_k^*$ and set $r= \dim(W')$.  Then $N_{G_k}(W)= C_{G_k}(x)$, where $x$ is the element of $\P(\Lambda^r(V_k^*))$ corresponding to the line $\Lambda^r(W')$ in the exterior power $\Lambda^r(V_k^*)$, so we can deduce the result by applying Theorem~\ref{thm:smoothcent} to the $G$-module $\Lambda^r(V^*)$.  We leave the details to the reader.
\end{Remark}

\begin{Remark}
\label{rem:normdpndce}
 Any hope to extend Theorem~\ref{thm:smoothnorm} to deal with normalisers of arbitrary closed subschemes of $X$ will fail without first imposing some further hypotheses. For example, \cite[Ex.~11.11]{HS16} gives for each prime $p$ a smooth subgroup $H_p$ of $\GL_3$ over an algebraically closed field of characteristic $p$ such that the normaliser of $H_p$ is non-smooth.  Here we can take $G= X= \GL_3$ over $R= \Z$ with $G$ acting on $X$ by conjugation; Theorem~\ref{thm:smoothnorm} does not apply because our closed subschemes $H_p$ are not of the form $Y_k$ for any fixed closed subscheme $Y$ of $X$.
\end{Remark}

\begin{Remark}\label{nonsmoothex}
 Likewise, Theorem~\ref{thm:smoothcent} fails without some kind of complexity hypothesis.  For example, let $G$ be a split simple and simply-connected group over $\Z$ and $N=V_G(\lambda)$ the Weyl module for $G$ with minuscule highest weight $\lambda$. Then $N_k$ is irreducible for each algebraically closed field $k$. When $\Char k=p>0$, let $M_k=(N_k)^{[1]}$ be the Frobenius twist of $N_k$ through $F\colon G_k\to G_k^{(1)}$; as $G_k\cong F(G_k)=G_k^{(1)}$ we have $M_k$ irreducible too. By irreducibility, $C_{G_k}(m)\subsetneqq G_k$ for any $0\neq m\in M_k$. The $k$-group $G$ being connected and smooth it follows that $\dim_k(C_{G_k}(m))<\dim G_k$, yet $\Lie(G_k)$ is in the kernel of the action on $M_k$. Thus $\dim_k\Lie(C_{G_k}(m))=\dim G_k$; it follows that $C_{G_k}(m)$ is not smooth.  Note that $X$ is of finite type here and $G$ and $X$ are fixed, but the action is not.
\end{Remark}

\begin{Remark} Here is a closely related example of the limits of Theorem~\ref{thm:smoothcent}; this time $X$, $G$ and the $G$-action on $X$ are fixed but $X$ is not of finite type.  Let $R=Z$, $G=\SL_2$ and $X$ be the $G$-scheme not of finite type which is the disjoint union of the $G$-modules $H^0(p)=\Ind_B^G(p)$ for every prime $p$---here $B$ is a Borel subgroup of $G$ and the integer $p$ is a free $Z$-module of rank $1$ on which $B$ acts with weight $p$ through the quotient map to a maximal torus. When $\Char k=p$, the simple socle of the $G_{\bar k}$-module $H^0(p)_{\bar k}$ is isomorphic to a Frobenius twist $L(1)_{\bar k}^{[1]}$ of the natural $2$-dimensional $G_{\bar k}$-module $L(1)_{\bar k}$. If $v$ is a point in the socle of $H^0(p)_{\bar k}$ then its centraliser is a proper subgroup of $G_{\bar k}$, but $v$ is centralised by the whole Lie algebra. We conclude that the centraliser of $v$ is not smooth. In this instance, the action map $\alpha\colon G\times X\to X$ is not $d$-bounded for any $d$.\end{Remark}

\subsection{Modules for reductive groups}
Let $k$ be a field of characteristic $p>0$ and let $G$ be a split reductive $k$-group (which by convention means that it is connected). We recall some of the basic representation theory of $G$ as found in \cite[II.1, II.2]{Jan03}. Let $B$ be a Borel subgroup of $G$, containing a split maximal torus $T$ of $G$. 
This choice defines a set of simple roots of the (reduced) root lattice $\Phi$ of $G$, and a subset of dominant weights $X(T)^+$ of the character lattice $X(T)=\Hom(T,\Gm)$. Moreover, there is a 1--1 correspondence between the dominant weights $\lambda\in X(T)^+$ and the simple $G$-modules $L(\lambda)$. Let $k_\lambda$ denote the  $1$-dimensional $k$-module on which $T$ acts with weight $\lambda$; this is also a $B$-module via the canonical projection $B\to T$. The induced representation $H^0(\lambda):=\Ind_B^G(k_\lambda)$ is finite-dimensional and contains $L(\lambda)$ as its unique simple submodule; i.e., as its socle. 

There is a natural pairing of $X(T)$ with the cocharacter lattice $Y(T)=\Hom(\Gm,T)$ denoted \[\langle \ ,\ \rangle:X(T)\times Y(T)\to \Z; (\lambda,\varphi)\mapsto \lambda\circ\phi.\] 
One can show that any root $\alpha\in \Phi\subseteq X(T)$ gives rise to a coroot $\alpha^\vee\in Y(T)$. Of particular interest is the coroot $\alpha_0^\vee$, where $\alpha_0$ is the highest short root of $\Phi$. For example:
\begin{Lemma}
\label{lem:rep_large_p}
(i) If $\langle\lambda,\alpha_0^\vee\rangle\leq p$ then $H^0(\lambda)=L(\lambda)$;

(ii) If $V$ is a $G$-module such that  $\langle\lambda,\alpha_0^\vee\rangle\leq p$ for all dominant weights $\lambda$ in $V$, then $V$ is semisimple.\end{Lemma}

Say a dominant weight $\lambda$ is \emph{$d$-bounded} if $\langle\lambda,\alpha_0^\vee\rangle\leq d$.

\begin{Proposition}\label{prop:repns} Let $\Phi$ be a (reduced) root system and fix $d\in \N$ an integer. There are primes $p_2$ and $p_3$ with the following properties. If $k$ is a field of  characteristic $p\geq p_2$ (resp.~$p\geq p_3$), if $G$ is any (connected) reductive $k$-group with root system $\Phi$, and if $V$ is a $G$-module whose dominant weights are all $d$-bounded, then the centralisers of all closed subschemes of $V$ in $G$ are smooth and the normalisers of all subschemes of $V$ of complexity at most $d$ are smooth.\end{Proposition}

\begin{proof}We may assume $p_2,p_3\geq d$. Then by Lemma~\ref{lem:rep_large_p} and local finiteness \cite[I.2.13--14]{Jan03}, \[V\cong \bigoplus_{\lambda\in\Lambda} L(\lambda)\cong\bigoplus_{\lambda\in\Lambda} H^0(\lambda),\] for some indexing set $\Lambda$ of dominant weights. Recall that the group $G$ is defined over $\Z$ in the sense that there is an algebraic $\Z$-group $\mathbf G$ such that $\mathbf G_k\cong G$ \cite[II.1.17]{Jan03}; furthermore $\mathbf G$ has a Borel subgroup $\mathbf B$ containing a split maximal torus $\mathbf T$ and such that $\mathbf B_k$ is a Borel subgroup of $G$ and $\mathbf T_k$ is a split maximal torus of $T$. All Borel subgroups of $G$ are $G(k)$-conjugate and all split maximal tori of $B$ are $B(k)$-conjugate, so without loss of generality, $B=\mathbf B_k$ and $T=\mathbf T_k$. Now induction commutes with flat base change \cite[I.3.5(3)]{Jan03}, which implies $H^0(\lambda)=\Ind_{\mathbf B}^{\mathbf G}(k_\lambda)_k$. Thus $V$ is isomorphic to the base change to $k$ of a $\mathbf G$-module $\mathbf V$, which is free as a $\Z$-module, possibly of infinite rank. Moreover as there are only finitely many possible isomorphism classes of the direct summands of $\mathbf V$---there are only finitely many $d$-bounded weights---the complexity of the action map of $\mathbb G$ on $\mathbf V$ is bounded by a function depending just on $\Phi$ and $d$, being that of the maximal summand. Now we are done by an application of Corollaries~\ref{cor:smoothcent} and \ref{cor:smoothnorm}. \end{proof}

\section{\texorpdfstring{Application: the Kostant-Kirillov-Souriau theorem in characteristic $p$}{Application: the Kostant-Kirillov-Souriau theorem in characteristic p}}
\label{S:KKS}
A foundational result in the theory of smooth complex Poisson varieties in Weinstein's symplectic foliation theorem, which states that every such variety decomposes into a disjoint union of its symplectic leaves. In general these are not complex submanifolds and, even when they are, they usually fail to be locally closed for the Zariski topology (see \cite[Remark~3.6(1)]{BG03} for example). Nevertheless there is a large class of Poisson varieties which admit locally closed symplectic leaves. If $G$ is a complex algebraic group with Lie algebra $\g$ then $\g^*$ carries a natural Poisson structure. A fundamental result in symplectic geometry states that the coadjoint orbits coincide with the symplectic leaves of $\g^*$; this is known as the Kostant-Kirillov-Souriau theorem. For semisimple groups this construction is exhaustive in a precise sense: every symplectic homogeneous space is a finite covering of such an orbit (see \cite[\textsection IV.7]{GS77}).

The theory of symplectic varieties in positive characteristic is still in its early stages, although the foundations have been carefully lain in the landmark work of Bezrukavnikov--Kaledin \cite{BK08}, where they classified Frobenius constant quantizations of smooth symplectic varieties. Another notable result is the proof of the formal version of Weinstein's splitting theorem \cite{Tik18}, which decomposes a restricted Poisson variety into a product of a symplectic subvariety and a transverse Poisson slice, in a formal neighbourhood of a point.

Although one cannot define symplectic leaves in positive characteristic, many Poisson varieties decompose into a disjoint union of quasi-affine symplectic subvarieties. This seems to be the most natural replacement for the symplectic foliation in this setting.

When $G$ is an algebraic group over an algebraically closed field of positive characteristic we can ask whether the coadjoint orbits are symplectic subvarieties of $\g^*$. In general the answer is negative, and the failure can be traced back to the fact that the quotient map from a group to an orbit is not always separable. We now demonstrate that the Kostant-Kirillov-Souriau theorem holds for algebraic groups over algebraically closed fields of sufficiently large positive characteristics, using Theorem~\ref{thm:smoothcent}. First, we require a preparatory lemma.  If $G$ is an algebraic $R$-group and $k$ is an algebraically closed $R$-field then we write $\g_k$ for $\Lie(G_k)$.

\begin{Proposition}
      Let $G$ be an algebraic $R$-group.  There exists a prime $p_4\in \N$ with the property that if  $k$ is any algebraically closed field of characteristic $p\geq p_4$, then for all $x\in \g_k$ and all $\chi\in \g_k^*$, the subgroups $C_{G_k}(x)$ and $C_{G_k}(\chi)$ are smooth.
\label{P:toppyplop}
\end{Proposition}

\begin{proof} Since $G$ is finitely presented, we have $A=R[G]=k[x_1,\dots,x_n]/I$ with $I=(f_1,\dots,f_r)$, and suppose $G$ has complexity at most $d$---which implies the $f_i$ have degree at most $d$. We may assume $G$ is non-trivial and so $d\geq 1$. The vanishing ideal $I_1$ at the identity is then finitely generated of complexity at most $d$ and so the Lie algebra $\Lie(G)\cong (I_1/I_1^2)^*$ and the adjoint action $Ad:G\times \Lie(G)\to\Lie(G)$ have bounded complexity as a function just of $d$; see \cite[I.2.4(8)]{Jan03} for a formula. (Certainly $d^4$ will suffice.)Moreover, if $Q$ is any $R$-algebra, $Q[G_Q]\cong R[G]\otimes_R Q$ has complexity at most $d$ and the same formula implies the adjoint action has complexity at most $d^4$ also. A similar argument yields the bounded complexity of the co-adjoint action.   The result now follows from Corollary~\ref{cor:smoothcent}.\end{proof}

Let $G$ be an algebraic $R$-group and let $p_4$ be as Proposition~\ref{P:toppyplop}. Pick an algebraically closed $R$-field $k$ of characteristic $p$. The following is a version of the Kostant-Kirillov-Souriau theorem.

\begin{Theorem}
\label{T:leaves}
If $p\geq p_4$ then the induced Poisson structure on coadjoint orbits in $\g_k^*$ is symplectic. Hence $\g_k^*$ decomposes into a disjoint union of locally closed symplectic subvarieties.
\end{Theorem}

\begin{proof}
Since $p\geq p_3$ it follows from Proposition~\ref{P:toppyplop} that the coadjoint stabilisers in $\g_k$ are all smooth. For the proof we fix $\chi \in \g_k^*$ and write $\O$ for the coadjoint orbit of $\chi$, write $\ad^*$ for the coadjoint representation of $\g_k$ on $\g_k^*$ and write $\g_k^\chi$ for the stabiliser of $\chi$. Thanks to \cite[2.1]{Jan04}, the natural bijective morphism $G_k/C_{G_k}(\chi) \rightarrow \O$ is separable and so we have isomorphisms
\begin{eqnarray}
\label{e:coadjointtangent}
T_\chi \O \overset{\sim}{\longrightarrow} \ad^*(\g_k) \chi \overset{\sim}{\longrightarrow} \g_k / \g_k^\chi.
\end{eqnarray}

Let $I \subseteq k[\g_k]$ be the defining ideal of $\overline{\O}$. Since $\O$ is $G$-stable, $I$ is $G$-stable and hence $\g_k$-stable. Hence $I$ is a Poisson ideal and $k[\overline{\O}]$ inherits a Poisson structure. Let $x_1,...,x_n$ be a basis for $\g_k$. The rank of the Poisson structure at $\chi$ is the rank of the matrix $\pi^\chi$ such that $\pi_{i,j}^\chi = \chi([x_i, x_j])$. However $\pi^\chi$ is nothing other than the matrix of the linear form $\bigwedge^2 \g_k \to k$ given by $(x,y) \mapsto \chi([x,y])$. The radical of this form is $\g_k^\chi$ and so we conclude that the rank of the Poisson structure on $\overline{\O}$ at $\chi$ is $\dim \g_k/ \g_k^\chi$. It follows from \eqref{e:coadjointtangent} that the rank coincides with $\dim \O$. Since $G$ acts by Poisson automorphisms and $\O$ is homogeneous we conclude that the Poisson structure on $\overline\O$ has full rank at every point of $\O$, as desired.
\end{proof}

{\footnotesize
\bibliographystyle{amsalpha}
\bibliography{bib}}

\end{document}